\newcommand{\al}{\alpha}
\newcommand{\ga}{\gamma}
\newcommand{\la}{\lambda}
\newcommand{\eps}{\varepsilon}
\newcommand{\iy}{\infty}
\theoremstyle{plain}
\newtheorem{thm}{Theorem}
\newtheorem{lem}{Lemma}
\theoremstyle{definition}
\newtheorem{example}{Example}
\newtheorem{alg}{Algorithm}
\newtheorem{prob}{Problem}
\newtheorem{remark}{Remark}
\begin{document}

\begin{center}
{\large\bf An inverse problem for Sturm-Liouville operators on trees with partial information given on the potentials
}
\\[0.2cm]
{\bf Natalia P. Bondarenko} \\[0.2cm]
\end{center}

\vspace{0.5cm}

{\bf Abstract.} We consider Sturm-Liouville operators on geometrical graphs without cycles (trees) with singular potentials from the class $W_2^{-1}$. We suppose that the potentials are known on a part of the graph, and study the so-called partial inverse problem, which consists in recovering the potentials on the remaining part of the graph from some parts of several spectra. The main results of the paper are the uniqueness theorem and a constructive procedure for the solution of the partial inverse problem. Our method is based on the completeness and the Riesz-basis property of special systems of vector functions, and the reduction of the partial inverse problem to the complete one on a part of the graph.
 
\medskip

{\bf Keywords:} partial inverse spectral problem; Sturm-Liouville operator on graph; quantum graph; singular potential.

\medskip

{\bf AMS Mathematics Subject Classification (2010):} 34A55; 34B05; 34B09; 34B45; 34L20; 34L40; 47E05

\vspace{1cm}

{\large \bf 1. Introduction}

\bigskip

The paper concerns the inverse problems theory for quantum graphs, i.e. metric graphs, equipped by differential operators.
Quantum graphs arise in applications as models of wave propagation through a domain being a thin neighborhood of a graph. Such models are studied in
organic chemistry, mesoscopic physics and nanotechnology, the theories of photonic crystals, waveguides, quantum chaos,
and other branches of science and engineering (see, for example, \cite{RS53, Griff53, Dat99, Imry97, Mur01, KK02, ES89, CLMT97, KS97}).
Moreover, the quantum graphs theory is the source of challenging problems, interesting from the mathematical point of view.

There is an extensive literature, devoted to differential operators on graphs. 
Basics of the spectral theory for quantum graphs and further references to applications can be found 
in the monographs \cite{BCFK06, BK13, GS06, Post12, PPP04} and the brilliant collection of symposia proceedings \cite{Exner08}.
Shorter elementary introductions to quantum graphs are provided in \cite{Kuch04, Ber16}.
The survey \cite{Yur16} contains a good overview of the inverse spectral problems theory for differential operators on graphs.
Such problems consist in recovering quantum graphs from various types of spectral characteristics, and generalize the well-studied 
inverse spectral problems for Sturm-Liouville, or Schr\"odinger operators on intervals \cite{Mar77, Lev84, PT87, FY01}.
However, \cite{Yur16} is mostly focused on the reconstruction of differential operator coefficients (for example, potentials of Sturm-Liouville operators), 
while there are also results on recovering a graph structure and matching conditions from some spectral data 
(see \cite{GS01, KS00, KN05, AKN10, EK12}).

In papers \cite{Yur05, Yur06}, Yurko has studied inverse problems for Sturm-Liouville operators on graphs without cycles (called trees) by spectral data of three different types:
eigenvalues together with weight numbers, Weyl functions and several spectra. Uniqueness theorems have been proved, 
and constructive solutions for the mentioned inverse problems have been provided, based on the method of spectral mappings \cite{FY01, Yur02}.
Later on, this approach was generalized for arbitrary compact graphs \cite{Yur10}. 

In \cite{Yur05, Yur06} it has been proved, in particular, that 
the Sturm-Liouville potential on a tree is uniquely specified by $b$ spectra, where $b$ is the number of boundary vertices. 
This proposition generalizes the well-known result by Borg \cite{Borg46}, that the Sturm-Liouville operator on a finite interval can be uniquely recovered 
from two spectra, corresponding to boundary conditions, different at one end of the interval. Nevertheless, it is still not clear,
whether the system of $b$ spectra is minimal data, determining the Sturm-Liouville operator on a tree. This question is opened even for
the three-edged star-shaped graph. Moreover, since the system of spectra contains a quite large number of eigenvalue sequences, the question arises,
if it is possible anyway to reduce the data.

One of the possibilities for the Sturm-Liouville operator on an interval is given by Hochstadt-Lieberman theorem \cite{HL78}. 
It states that, if the potential is known a priori on the half of the interval, a spectrum determines it uniquely on the other half. 
Important developments of the results by Hochstadt and Lieberman were obtained in \cite{GS00, Sakh01, MP10, HM04-half}.
Similar ideas can be applied for quantum graphs. However, this will not be a trivial generalization because of complicated graph structure.

Pivovarchik \cite{Piv00} studied inverse problems for the Sturm-Liouville operator on a star-shaped graph
and noticed, that if the potentials are known a priori on all the edges of the graph, except one, then only one spectrum is required for the unique
specification of the unknown potential. Yang \cite{Yang10} has proved, that, in fact, some fractional part of the spectrum
is sufficient for that. Some special cases of recovering the potential on a part of an edge, while it is known on the other part of the graph, 
were studied in \cite{Yur09, Yang11, Yang17, YW17}. 
A constructive method of solution for such partial inverse problems
on star-shaped graphs with equal length edges have been developed in \cite{Bond17-1, Bond17-2, Bond17-preprint, Bond17-3}. 
This method is based on the Riesz-basis property of some systems of vector functions, 
and allows one to prove the local solvability and the stability for the solution of partial inverse problems on graphs \cite{Bond17-1}. 

The goal of this paper is to develop the general theory of partial inverse problems for differential operators on trees of arbitrary structure.
We consider a tree divided into two connected subtrees: ``known'' and ``unknown'' parts. We suppose that the Sturm-Liouville potentials are known
a priori on the first part, and select fractional parts of a spectrum or several spectra, uniquely specifying the potentials on the second part.
Constructing special systems of vector functions, complete in some Banach space, we reduce the partial inverse problem to the complete inverse problem 
on the ``unknown'' part of the tree, which has been studied in \cite{Yur05, Yur06}. This reduction allows us to prove the uniqueness theorem and to provide
an algorithm for reconstruction of the operator on the tree, given the partial information on the potentials.
Note that partial inverse problems on trees were also studied in \cite{BS17}, but there the particularly different situation was discussed,
when the potential was known on the only one edge and unknown on the others, so \cite{BS17} is not closely related to the present paper.

Another feature of this paper is that we consider Sturm-Liouville operators with singular potentials from the Sobolev space $W_2^{-1}(0, \pi)$.
Spectral properties of such operators on intervals have been studied by Savchuk and Shkalikov \cite{SS99, Sav01},
inverse problems theory has been developed by Hryniv and Mykytyuk \cite{HM03, HM04-2spectra, HM04-half, HM04-transform}.
There are only a few works for differential operators with singular coefficients on graphs \cite{FIY08, Bond17-preprint}.
However, for our purposes the class $W_2^{-1}(0, \pi)$ does not cause any additional difficulties, comparing with $L_2(0, \pi)$,
and even makes the presentation of the results clearer, so we work with this more general class.
 
The paper is organized as follows. In Section~2, we introduce the Sturm-Liouville boundary value problem with singular potentials on a tree, investigate properties of its characteristic function and asymptotic behavior of eigenvalues. Section~3 is devoted to the partial inverse problem on the tree (Problem~\ref{prob:pip}) and the general scheme of its solution. At the beginning of Section~3, we study the auxiliary problem (Problem~\ref{prob:ap}) of recovering characteristic functions, associated with the ``unknown part'' of the tree, from some part of the spectrum for the whole tree. The solution of the auxiliary problem reduces our partial inverse problem to the complete inverse problem on the ``unknown part'' of the tree by the system of spectra (Problem~\ref{prob:ip}). The last problem was solved by Yurko and co-authors \cite{Yur05, Yur06, FIY08} by the method of spectral mappings. Applying their results, we arrive at the uniqueness theorem and the constructive solution of the partial inverse problem. The results of Section~3 are quite abstract. The uniqueness theorem (Theorem~\ref{thm:uniq}) requires the completeness of some system of vector functions, and the constructive solution requires its unconditional basicity. In Section~4, we provide sufficient conditions for the completeness of this system, choosing subsequences of eigenvalues with appropriate asymptotics. Section~5 contains two examples of particular quantum graphs, illustrating the general scheme of solution of the partial inverse problem, and simplifications available in certain special cases. 

\bigskip

{\large \bf 2. Asymptotic behavior of eigenvalues and other preliminaries}

\bigskip

In this section, we state the boundary value problem for the Sturm-Liouville equations with singular potentials on a tree and study its spectral properties.
The main results of this section are the representation of the characteristic function, given by Lemma~\ref{lem:Delta}, and the asymptotic formulas for the eigenvalues (Lemma~\ref{lem:asymptla}),
which will be used for analysis of the inverse problem.

Consider a compact tree $G$ with the set of vertices $V$ and the edges $E = \{ e_j \}_{j = 1}^m$.
For each vertex $v \in V$, we denote the set of edges associated with $v$ by $E_v$ and call the size of this set $|E_v|$ the {\it degree} of $v$.
The vertices of degree $1$ are called {\it boundary vertices}, the others are called {\it internal vertices}.  
Denote the set of the boundary vertices of the graph $G$ by $\partial G$ and $\mbox{int}\, G := V \backslash \partial G$.

Let us assume that the edges of $G$ have equal lengths $\pi$. If the lengths of the edges are different, but rationally dependent,
one can achieve the case of equal lengths, dividing the edges by additional vertices. For each edge $e_j$, we introduce a parameter $x_j \in [0, \pi]$.
The value $x_j = 0$ corresponds to one of the vetrices incident to $e_j$, and $x_j = \pi$ corresponds to the other one.

A function on the tree $G$ is a vector function $y = [y_j]_{j = 1}^m$,
where $y_j = y_j(x_j)$, $x_j \in [0, \pi]$, $j = \overline{1, m }$.
Let $q_j$, $j = \overline{1, m}$, be real-valued functions from the class $W_2^{-1}(0, \pi)$, i.e.
$q_j = \sigma_j'$, $\sigma_j \in L_2(0, \pi)$, where the derivative is considered in the sense of distributions. 
The Sturm-Liouville expression
$$
\ell_j y_j := -y_j'' + q_j(x_j) y_j  
$$
on the edge $e_j$ can be understood in the following sense (see \cite{SS99, Sav01, HM03}):
$$
\ell_j y_j = -(y_j^{[1]})' - \sigma_j(x_j) y_j^{[1]} - \sigma_j^2(x_j) y_j,
$$ 
where $y_j^{[1]} = y_j' - \sigma_j y_j$ is a {\it quasi-derivative}, and $y_j$ belongs to the domain
$$
\mathcal D(\ell_j) = \{ y_j \in W_2^1[0, \pi] \colon y_j^{[1]} \in W_1^1[0, \pi], \: \ell_j y_j \in L_2(0, \pi) \}. 
$$

Suppose that the vertex $u \in V$ corresponds to the end $x_j = 0$ of the edge $e_j$ and the vertex $v \in V$ corresponds to 
$x_j = \pi$. Let $[\ga_j]_{j = 1}^m$ be some real constants. Further we use the notations
$$
\begin{array}{ll}
    y_j(u) = y_j(0), \quad & y_j(v) = y_j(\pi),\\
    y^{[1]}_j(u) = -y^{[1]}_j(0), \quad & y^{[1]}_j(v) = y^{[1]}_j(\pi) + \ga_j y_j(\pi).
\end{array}
$$
For $u \in \partial G$ we can omit the index $j$ and write $y(u)$, $y^{[1]}(u)$.

Consider the boundary value problem $L$ on the tree $G$ for the system of Strum-Liouville equations
\begin{equation} \label{eqv}
    (\ell_j y_j)(x_j) = \la y_j (x_j), \quad x_j \in (0, \pi), \: y_j \in \mathcal D(\ell_j), \: j = \overline{1, m},
\end{equation}
with the standard matching conditions 
\begin{equation} \label{MC}
\arraycolsep=1.4pt\def\arraystretch{1.5}
\left.
\begin{array}{ll}    
    y_j(v) = y_k(v), \quad e_j, e_k \in E_v \quad \text{(continuity condition)}, \\
    \sum\limits_{e_j \in E_v} y^{[1]}_j(v) = 0, \quad \text{(Kirchhoff's condition)}
\end{array} \qquad
\right\}
\end{equation}
in the internal vertices $v \in \mbox{int}\, G$, and the Dirichlet $y(v) = 0$ 
or the Neumann $y^{[1]}(v) = 0$ boundary conditions
in the vertices $v \in \partial G$. There can be different types of 
boundary conditions in different vertices. 
Fix some boundary conditions of the described type, and denote them by BC.
If we further consider any splitting of $G$ into subtrees,
we suppose that the parametrization by $x_j$ is inherited, 
and the matching and the boundary conditions for the subtrees remains the same
in the vertices, those are not influenced by splitting. 

Let $C_j(x_j, \la)$ and $S_j(x_j, \la)$ be the solutions of equations \eqref{eqv} 
for each fixed $j = \overline{1, m}$, satisfying the initial conditions
\begin{equation} \label{ic}
   C_j(0, \la) = S_j^{[1]}(0, \la) = 1, \quad C_j^{[1]}(0, \la) = S_j(0, \la) = 0.                       
\end{equation}

It is well-known, that the boundary value problem $L$ has a purely discrete spectrum,
which consists of a countable set of eigenvalues. They coincide (counting with their multiplicities)
with the zeros of the entire analytic function, called the {\it characteristic function}, which can be constructed by the following recurrent definition 
(see \cite{BS17, LP09, LY12, Yur09}).

\begin{enumerate}
\item For $m = 1$, the characteristic function $\Delta(\la)$ is defined by the following formulas
for different types of boundary conditions:
\begin{equation} \label{Dsimp}
\arraycolsep=1.4pt\def\arraystretch{1.5}
\left.
\begin{array}{ll}
y_1(0) = y_1(\pi) = 0 \colon & \quad \Delta(\la) = S_1(\pi, \la), \\
y_1(0) = y_1^{[1]}(\pi) = 0 \colon & \quad \Delta(\la) = S_1^{[1]}(\pi, \la) + \ga_1 S_1(\pi, \la), \\
y_1^{[1]}(0) = y_1(\pi) = 0 \colon & \quad \Delta(\la) = C_1(\pi, \la), \\
y_1^{[1]}(0) = y_1^{[1]}(\pi) = 0 \colon & \quad \Delta(\la) = C_1^{[1]}(\pi, \la) + \ga_1 C_1(\pi, \la).
\end{array} \qquad
\right\}
\end{equation}

\item Let $G$ has more than one edge, and let $u$ be an internal vertex of degree $r$.
Splitting the vertex $u$, we split $G$ into $r$ subtrees $G_j$, $j = \overline{1, r}$.
For each $j = \overline{1, r}$, let $\Delta_j^D(\la)$ and $\Delta_j^N(\la)$ be 
the characteristic functions for the boundary value problems for equations \eqref{eqv}
on the tree $G_j$ with the Dirichlet $y(u) = 0$ and the Neumann $y^{[1]}(u) = 0$ boundary condition
in the vertex $u$, respectively, the matching conditions \eqref{MC} in the vertices $v \in \mbox{int}\, G_j$ and
the boundary conditions BC in $v \in \partial G_j \backslash \{ u \}$. 
Then
\begin{equation} \label{Dfact}
  \Delta(\la) = \sum_{j = 1}^r \Delta_j^N(\la) \prod_{\substack{k = 1 \\ k \ne j}}^r \Delta_k^D(\la).
\end{equation}
This definition does not depend on the choice of $u$.
\end{enumerate} 

Define $\rho = \sqrt{\la}$, $\mbox{Re}\, \rho \ge 0$. Denote by $L_0$ the boundary value problem 
$L$ with $\sigma_j = 0$ and $\ga_j = 0$, $j = \overline{1, m}$, and denote by $\Delta_0(\la)$ its characteristic function.

\begin{lem} \label{lem:Delta0}
The characteristic function $\Delta_0(\la)$ can
be represented in the form
\begin{equation} \label{D0factR}
  \Delta_0(\la) = \rho^{1-d} R_m(\sin \rho \pi, \cos \rho \pi),
\end{equation}
where $d$ is the number of the Dirichlet conditions among BC
and $R_m$ is a polynomial of two variables of degree $m$. Moreover, 
$R_m$ can be represented in the form
\begin{equation} \label{RQ}
   R_m(\sin \rho \pi, \cos \rho \pi) = 
   \begin{cases} 
   \sin \rho \pi Q_{m-1}(\cos \rho \pi), \quad \text{if $d$ is even}, \\
   Q_m(\cos \rho \pi), \quad \text{if $d$ is odd},
   \end{cases}
\end{equation}
where $Q_k$, $k = m-1, m$, are polynomials of degree $k$, such that
\begin{equation} \label{symQ}
   Q_k(z) = (-1)^k Q_k(-z).
\end{equation}        
\end{lem}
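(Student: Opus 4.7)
The plan is to proceed by induction on the number of edges $m$, using the recurrence \eqref{Dsimp}--\eqref{Dfact} for $\Delta_0(\la)$.

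For the base case $m=1$, I substitute the free-equation solutions $C_1(x,\la) = \cos\rho x$, $S_1(x,\la) = \rho^{-1}\sin\rho x$ (valid since $\sigma_1 \equiv 0$ and $\ga_1 = 0$ under $L_0$), together with their quasi-derivatives $C_1^{[1]} = -\rho\sin\rho x$ and $S_1^{[1]} = \cos\rho x$, into \eqref{Dsimp}. For BC of type DD, DN, ND, NN (with $d = 2, 1, 1, 0$ respectively), this gives $\Delta_0 = \rho^{-1}\sin\rho\pi$, $\cos\rho\pi$, $\cos\rho\pi$, $-\rho\sin\rho\pi$, so \eqref{D0factR}--\eqref{symQ} hold by inspection with $R_1 = \sin\rho\pi, \cos\rho\pi, \cos\rho\pi, -\sin\rho\pi$ and $Q_0 \equiv \pm 1$ or $Q_1(z) = z$.

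For the inductive step with $m \ge 2$, pick any internal vertex $u$ of $G$ and split $G$ at $u$ into subtrees $G_1,\dots,G_r$ with $m_j < m$ edges and $\sum m_j = m$. Let $d_j$ count the Dirichlet conditions of BC on $\partial G_j\setminus\{u\}$, so $\sum d_j = d$. The inductive hypothesis (applied with Dirichlet counts $d_j+1$ and $d_j$ for the $D$ and $N$ problems on $G_j$) yields $\Delta^D_j = \rho^{-d_j} R^D_{m_j}$ and $\Delta^N_j = \rho^{1-d_j} R^N_{m_j}$. Substituting into \eqref{Dfact}, the $\rho$-powers telescope to $\rho^{1-d}$ in every summand, giving $\Delta_0 = \rho^{1-d} R_m$ with $R_m := \sum_j R^N_{m_j} \prod_{k\ne j} R^D_{m_k}$, a polynomial of total degree at most $m$. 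To verify \eqref{RQ}, reduce $R_m$ to canonical form $A(\cos\rho\pi) + \sin\rho\pi B(\cos\rho\pi)$ via $\sin^2\rho\pi = 1 - \cos^2\rho\pi$; by the inductive hypothesis $R^N_{m_j}$ carries a $\sin\rho\pi$ factor iff $d_j$ is even and $R^D_{m_k}$ iff $d_k$ is odd, and a short parity count shows the total number of such factors in the $j$-th summand is $\equiv d + 1 \pmod 2$ independent of $j$, forcing either $A \equiv 0$ (if $d$ is even) or $B \equiv 0$ (if $d$ is odd). For \eqref{symQ}, the substitution $\rho\pi \mapsto \rho\pi + \pi$ flips the signs of both $\sin\rho\pi$ and $\cos\rho\pi$, and by the inductive hypothesis each factor $R^{N/D}_{m_j}$ is multiplied by $(-1)^{m_j}$ regardless of its sine-factor type, so $R_m$ as a whole acquires the factor $(-1)^m$; translating this back through the canonical form yields $Q_k(-z) = (-1)^k Q_k(z)$.

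The subtlest point is to confirm that the total degree of $R_m$ equals exactly $m$, i.e.\ that top-degree terms do not cancel across the summation. I would address this by a leading-order analysis as $\mathrm{Im}\,\rho \to +\infty$, where $\sin\rho\pi \sim \tfrac{i}{2}e^{-i\rho\pi}$ and $\cos\rho\pi \sim \tfrac{1}{2}e^{-i\rho\pi}$, so a polynomial of degree $k$ in these grows like $e^{-ik\rho\pi}$. Since $\Delta_0(\la)$ is an entire function of exponential type $m\pi$ (equal to the total edge length of $G$, a standard fact for quantum graphs), and the prefactor $\rho^{1-d}$ is only polynomial, $R_m$ must have exponential type exactly $m\pi$, forcing its degree to be exactly $m$.
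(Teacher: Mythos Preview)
Your proof is correct and follows the same inductive scheme as the paper: verify the four one-edge cases by direct substitution, then split at an internal vertex and apply \eqref{Dfact}. In fact you supply considerably more detail than the paper does for the inductive step---the paper merely says ``one can prove the lemma by induction, factorizing the characteristic function according to \eqref{Dfact}'' and leaves the parity bookkeeping for \eqref{RQ}--\eqref{symQ} to the reader, whereas you carry it out explicitly. For the exact-degree claim, the paper defers to the subsequent (unnumbered) asymptotic lemma giving $\Delta_0(\rho^2) \sim \tau^{1-d} 2^{-m}\prod_{v}|E_v|\exp(m\tau\pi)$ as $\rho=i\tau\to i\infty$; your exponential-type argument is the same idea phrased differently, and your appeal to the ``standard fact'' that $\Delta_0$ has type $m\pi$ is exactly what that lemma establishes.
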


\begin{proof}
For the one-edge trees with $\sigma_1 = 0$ and $\ga_1 = 0$ 
we have the following characteristic functions, which satisfy the assertion of the lemma.
\begin{align*}
y_1(0) = y_1(\pi) = 0 \colon & \quad \Delta_0(\la) = \frac{\sin \rho \pi}{\rho}, \quad d = 2,\\
y_1(0) = y_1'(\pi) = 0 \colon & \quad \Delta_0(\la) = \cos \rho \pi, \quad d = 1, \\
y_1'(0) = y_1(\pi) = 0 \colon & \quad \Delta_0(\la) = \cos \rho \pi, \quad d = 1, \\
y_1'(0) = y_1'(\pi) = 0 \colon & \quad \Delta_0(\la) = -\rho \sin \rho \pi, \quad d = 0.
\end{align*}

For $m > 1$, one can prove the lemma by induction, factorizing the characteristic function
according to \eqref{Dfact}. 
\end{proof}

Note that the degree of the polynomial $R_m$ is exactly $m$. This follows from the next lemma, which 
is also easily proved by induction.

\begin{lem}
The following asymptotic relation holds as $\rho = i \tau$, $\tau \to +\infty$:
$$
   \Delta_0(\rho^2) = \tau^{1 - d} 2^{-m} \prod_{v \in V} |E_v| \exp(m \tau \pi) (1 + o(1)).
$$
\end{lem}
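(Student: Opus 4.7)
The plan is induction on the number of edges $m$, with base case $m=1$ handled directly from the four explicit formulas displayed in the proof of Lemma~\ref{lem:Delta0}, together with the identities $\sin(i\tau\pi) = i\sinh(\tau\pi)$ and $\cos(i\tau\pi) = \cosh(\tau\pi)$. Substituting $\rho = i\tau$ into each of the four cases turns $\Delta_0(\rho^2)$ into a real, strictly positive function whose leading asymptotic matches $\tau^{1-d}\,2^{-1}\,e^{\tau\pi}(1+o(1))$; for $m=1$ both vertices have degree $1$, so $\prod_{v\in V}|E_v| = 1$ and the claim holds in each case.

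For the induction step, I would pick any internal vertex $u$ of degree $r$, split $G$ at $u$ into subtrees $G_1,\dots,G_r$ with edge counts $m_1+\cdots+m_r = m$, and apply the factorization \eqref{Dfact}. In each $G_j$ the vertex $u$ becomes a pendant of degree $1$, so if $|E_v|_j$ denotes the degree of $v$ in $G_j$ then $|E_u|_j = 1$ and $|E_v|_j = |E_v|$ for $v \ne u$. Let $\tilde d_j$ be the number of Dirichlet conditions among BC restricted to $\partial G_j \setminus \{u\}$; then $d_j^D = \tilde d_j + 1$, $d_j^N = \tilde d_j$, and $\sum_j \tilde d_j = d$. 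Applying the induction hypothesis to each $\Delta_{0,j}^{D}(\rho^2)$ and $\Delta_{0,j}^{N}(\rho^2)$ and multiplying out the summand $\Delta_{0,j}^N \prod_{k \ne j} \Delta_{0,k}^D$, the bookkeeping is: the combined $\tau$-exponent is $(1 - d_j^N) + \sum_{k\ne j}(1 - d_k^D) = r - \sum_k \tilde d_k - (r-1) = 1 - d$, independent of $j$; the combined coefficient is $\prod_k 2^{-m_k}\prod_{v\in V_k}|E_v|_k = 2^{-m}\prod_{v\ne u}|E_v| = \frac{1}{r}\cdot 2^{-m}\prod_{v\in V}|E_v|$, also independent of $j$; and the exponential factor is $e^{m\tau\pi}$.

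The one genuine obstacle is ruling out catastrophic cancellation in the sum over $j = 1,\dots,r$: if the asymptotic leading terms carried opposite signs they could annihilate and the true leading order would shift. I would handle this by noting that the base case shows each of the four elementary characteristic functions is real and strictly positive on $\rho = i\tau$ for large $\tau$, and positivity is preserved under products and sums of eventually positive real functions, so every subtree characteristic function $\Delta_{0,k}^{D}(\rho^2)$ and $\Delta_{0,k}^{N}(\rho^2)$ inherits positivity through the recursive definition. Hence all $r$ terms in the sum over $j$ carry the same sign, and summing the $r$ identical leading contributions gives $r \cdot \frac{1}{r} = 1$ times the claimed right-hand side, completing the induction.
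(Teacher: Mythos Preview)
Your proof is correct and follows exactly the approach the paper indicates: the paper merely asserts the lemma ``is also easily proved by induction'' with no details, and your argument supplies precisely that induction via the recursive factorization~\eqref{Dfact}. One small comment: your separate positivity discussion, while valid, is not really needed to rule out cancellation, since the induction hypothesis already gives each $\Delta_{0,k}^{D/N}(\rho^2)$ a manifestly positive leading coefficient $\tau^{1-d_k}2^{-m_k}\prod_{v\in V_k}|E_v|_k\,e^{m_k\tau\pi}$, so all $r$ summands in~\eqref{Dfact} automatically share the same sign at leading order.
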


\begin{lem} \label{lem:asymptla0}
The eigenvalues of the boundary value problem $L_0$ can be numbered as 
$\{ \la_{nk}^0 \}_{n \in \mathbb N_0, \, k = \overline{1, m}}$,
so that
\begin{equation} \label{asymptla0}
   \rho_{nk}^0 = \sqrt{\lambda_{nk}^0} = n + \al_k, \quad n \in \mathbb N_0, \, k = \overline{1, m},
\end{equation}
where $\mathbb N_0 = \mathbb N \cup \{ 0 \}$, $\al_k \in [0, 1)$. For every $\al_{k_1} \ne 0$, 
there exists $\al_{k_2} = 1 - \al_{k_1}$, $1 \le k_1, k_2 \le m$.

Positive eigenvalues occur in the sequence $\{ \la_{nk}^0 \}_{n \in \mathbb N, \, k = \overline{1, m}}$ the number of times,
equal to their multiplicities. The multiplicity of the zero eigenvalue equals 
$z_{\al} + 1 - d$, where $z_{\al}$ is the number of zeros among $\al_k$, $k = \overline{1, m}$.
\end{lem}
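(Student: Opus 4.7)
The plan is to extract the claimed structure of the spectrum of $L_0$ from the factorisation $\Delta_0(\la)=\rho^{1-d}R_m(\sin\rho\pi,\cos\rho\pi)$ of Lemma~\ref{lem:Delta0}, together with the representation \eqref{RQ}--\eqref{symQ} of $R_m$. Since $L_0$ is self-adjoint, its spectrum is real and non-negative, so every zero of $\Delta_0$ corresponds to some $\rho\ge 0$; in particular every root $z_0$ of the polynomial $Q_k$ appearing in $R_m$ must lie in $[-1,1]$, for otherwise $\cos\rho\pi=z_0$ would have no real solution.

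The heart of the argument is to read off the parameters $\al_k$ and the claimed pairing from the roots of $Q_k$. For each root $z_0\in(-1,1)$ of $Q_k$ with multiplicity $\mu$, I set $\al=\arccos(z_0)/\pi\in(0,1)$; the non-negative solutions of $\cos\rho\pi=z_0$ then form the set $\{2\ell+\al:\ell\ge 0\}\cup\{(2\ell+1)-\al:\ell\ge 0\}$. By the symmetry \eqref{symQ}, $-z_0$ is also a root of $Q_k$ of the same multiplicity $\mu$, and the non-negative solutions of $\cos\rho\pi=-z_0$ are obtained by swapping $\al\leftrightarrow 1-\al$. Merging these two families reorganises the zeros associated with the pair $\{z_0,-z_0\}$ into exactly the two sequences $\{n+\al\}_{n\ge 0}$ and $\{n+(1-\al)\}_{n\ge 0}$, each of multiplicity $\mu$, which realises the desired pairing $\al_{k_2}=1-\al_{k_1}$. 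The remaining cases are handled similarly: a root $z_0=0$ (possible only when $Q_k$ is odd) produces the self-paired sequence with $\al=1/2$, while roots at $z_0=\pm 1$, together with the extra factor $\sin\rho\pi$ that is present when $d$ is even, all feed into self-paired sequences with $\al=0$. Because $\deg R_m=m$ (from the preceding asymptotic lemma), a degree count shows that the total distribution produces exactly $m$ sequences, which are indexed by $k=\overline{1,m}$.

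For the final assertion I would compute directly the order of vanishing of $\Delta_0$ at $\rho=0$: the prefactor $\rho^{1-d}$ contributes order $1-d$, the factor $\sin\rho\pi$ (when $d$ is even) contributes order $1$, and each root of $Q_k$ at $z=1$ of multiplicity $\mu_1$ contributes order $2\mu_1$ via $\cos\rho\pi-1\sim -(\rho\pi)^2/2$, whereas roots at $z=-1$ contribute nothing at $\rho=0$ since $\cos 0=1$. Using \eqref{symQ} to equate the multiplicity $\mu_{-1}$ of $z=-1$ with $\mu_1$, the analysis of the previous paragraph identifies the $\al_k=0$ sequences as coming from $\sin\rho\pi$ (when $d$ is even) and from the pair $z=\pm 1$ of $Q_k$-roots (each such pair contributing $2\mu_1$ to $z_\al$), so that the combined $\rho$-order at the origin is exactly $1-d+\mathbf{1}_{d\text{ even}}+2\mu_1=1-d+z_\al$, as claimed. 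I expect the main obstacle to be this last bookkeeping step at the origin, where several independent vanishing mechanisms overlap and one must verify that the asymmetry between the roles of $z=1$ and $z=-1$, together with the passage from the $\rho$-order to the multiplicity attached to $\la=0$ in the enumeration $\{\la_{nk}^0\}_{n\in\mathbb N_0,\,k=\overline{1,m}}$, is consistently accounted for.
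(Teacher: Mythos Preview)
Your approach is correct and essentially the same as the paper's: both extract the $\alpha_k$ from the roots of $Q_k$ via \eqref{D0factR}--\eqref{symQ} and use the symmetry $Q_k(z)=(-1)^k Q_k(-z)$ to obtain the pairing $\alpha_{k_1}\leftrightarrow 1-\alpha_{k_1}$. The paper streamlines the pairing step slightly by observing that $R(\rho):=R_m(\sin\rho\pi,\cos\rho\pi)$ has period~$1$ and is either even or odd, so that $R(1-\alpha_k)=R(\alpha_k-1)=\pm R(\alpha_k)=0$ follows immediately without explicitly merging the solution sets of $\cos\rho\pi=\pm z_0$; for the zero-eigenvalue count the paper gives no details beyond ``easily calculated using Lemma~\ref{lem:Delta0}'', so your bookkeeping at $\rho=0$ is already more thorough than the original.
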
                                                                                         

\begin{proof}
In view of \eqref{RQ} and \eqref{symQ}, the function $R(\rho) := R_m(\sin \rho \pi, \cos \rho \pi)$
has the period $1$. For definiteness, consider the case of even $d$.
Then the roots of $R(\rho)$ equal $n + \al_k$ for $n \in \mathbb Z$ and 
$\al_k = \frac{1}{\pi} \arccos z_k$, $k = \overline{1, m-1}$,
where $z_k$ are the roots of the polynomial $Q_{m-1}(z)$ (counting with their 
multiplicities), and $\al_m = 0$.
Since the problem $L$ is self-adjoint, its eigenvalues are real. Hence $z_k \in [-1, 1]$.
In view of \eqref{symQ}, a root $z_{k_1} = 1$ always has a pair $z_{k_2} = -1$, $k_1, k_2 < m$.
In this case, we put $\al_{k_1} = \al_{k_2} = 0$, and consequently, $\al_k \in [0, 1)$
for $k =  \overline{1, m}$. 
According to \eqref{RQ} and \eqref{symQ}, the function $R(\rho)$ is either even or odd. Hence
$$
   R(1 - \al_k) = R(-1 + \al_k) = R(\al_k) = 0.
$$
Consequently, for each $k_1 = \overline{1, m}$, such that $\al_{k_1} \ne 0$,
there exists $k_2 = \overline{1, m}$, such that $1 - \al_{k_1} = \al_{k_2}$. Using \eqref{D0factR}, we arrive at formula \eqref{asymptla0} for the zeros of $\Delta_0(\la)$.

One can also easily calculate the multiplicity of the zero
eigenvalue, using Lemma~\ref{lem:Delta0}.                         
\end{proof}

Denote by $B_{2, a}$ the class of Paley-Wiener functions of exponential type not greater than $a$,
belonging to $L_2(\mathbb R)$. We use the notations $\varkappa_{k, odd}(\rho)$ and $\varkappa_{k, even}(\rho)$
for various odd and even functions from $B_{2, k\pi}$, respectively. Clearly, that
$$
   \varkappa_{k, odd}(\rho) = \int_0^{k \pi} \mathscr K(t) \sin \rho t \, dt, \quad
   \varkappa_{k, even}(\rho) = \int_0^{k \pi} \mathscr N(t) \cos \rho t \, dt,
$$
where $\mathscr K, \mathscr N \in L_2(0, k\pi)$.

Using the transformation operators \cite{HM04-transform}, one can obtain the following relations (see \cite{HM03, HM04-2spectra}):
\begin{equation} \label{CS}
\arraycolsep=1.4pt\def\arraystretch{1.5}
\left.
\begin{array}{ll}
   C_j(\pi, \la) & = \cos \rho \pi + \varkappa_{1, even}(\rho), \quad 
   C_j^{[1]}(\pi, \la) = -\rho \sin \rho \pi + \rho \varkappa_{1, odd}(\rho) + C_j^{[1]}(\pi, 0), \\
   S_j(\pi, \la) & = \dfrac{\sin \rho \pi}{\rho} + \dfrac{\varkappa_{1, odd}(\rho)}{\rho}, \quad
   S_j^{[1]}(\pi, \la) = \cos \rho \pi + \varkappa_{1, even}(\rho).
\end{array}
\qquad \right\}
\end{equation}

\begin{lem} \label{lem:Delta}
The characteristic function $\Delta(\la)$ admits the following representation
\begin{equation} \label{DfactR}
   \Delta(\la) = \rho^{1-d} \left( R_m(\sin \rho \pi, \cos \rho \pi) + \varkappa_m(\rho) + \frac{C}{\rho}\right),
\end{equation}
where $d$ and $R_m(\sin \rho \pi, \cos \rho \pi)$ are the same as in Lemma~\ref{lem:Delta0},
$C$ is a constant, and
\begin{equation} \label{kappam}
   \varkappa_m(\rho) = \begin{cases}
                          \varkappa_{m, odd}(\rho), \quad \text{if $d$ is even}, \\
                          \varkappa_{m, even}(\rho), \quad \text{if $d$ is odd}.
                       \end{cases}
\end{equation}
If $d > 0$, then $C = 0$.
\end{lem}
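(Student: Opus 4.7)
The plan is to prove the lemma by induction on the number of edges $m$, paralleling the proof of Lemma~\ref{lem:Delta0} while tracking the new contributions from $\sigma_j$ and $\gamma_j$. The base case covers the four combinations in \eqref{Dsimp}; the inductive step relies on the factorization \eqref{Dfact}.

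For $m=1$, I would substitute \eqref{CS} into each of the four formulas of \eqref{Dsimp}. The $\cos\rho\pi$ and $\sin\rho\pi$ pieces immediately reproduce $R_1(\sin\rho\pi,\cos\rho\pi)$, and the remaining terms are absorbed into $\varkappa_1(\rho) + C/\rho$ via three elementary identities: first, $\sin\rho\pi/\rho = \int_0^\pi\cos\rho t\,dt$ is itself a $\varkappa_{1,even}$-function; second, after a Fubini rearrangement $\varkappa_{1,odd}(\rho)/\rho$ becomes a $\varkappa_{1,even}$-function with new kernel $\int_s^\pi\mathscr K(t)\,dt$; and third, the identity $\gamma\cos\rho\pi = \gamma - \rho\int_0^\pi\gamma\sin\rho t\,dt$, together with its variant $\gamma\varkappa_{1,even}(\rho) = \text{const} + \rho\tilde\varkappa_{1,odd}(\rho)$ obtained by an analogous integration by parts, converts every leftover $\gamma$-term into a constant plus $\rho$ times an odd Paley-Wiener piece. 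A case-by-case check then yields the required parity of $\varkappa_1$ and shows $C=0$ as soon as a Dirichlet condition appears at at least one endpoint.

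For the induction step, I would split at an internal vertex $u$ of degree $r$ into subtrees $G_j$ with $m_j$ edges ($\sum m_j = m$) and apply \eqref{Dfact}. Since $\partial G$ is the disjoint union of the sets $\partial G_j\setminus\{u\}$, one has $d = \sum d_j$ with $d_j^D = d_j + 1$ and $d_j^N = d_j$, whence a short arithmetic shows that every term of \eqref{Dfact} carries the common factor $\rho^{1-d}$. After pulling this out and applying the inductive hypothesis, the identity to be verified becomes
\begin{equation*}
\sum_j \bigl(R_{m_j}^N + \varkappa_{m_j}^N + C_j^N/\rho\bigr)\prod_{k\neq j}\bigl(R_{m_k}^D + \varkappa_{m_k}^D + C_k^D/\rho\bigr) = R_m + \varkappa_m + C/\rho.
\end{equation*}
The purely polynomial piece $\sum_j R_{m_j}^N\prod_{k\neq j}R_{m_k}^D$ equals $R_m(\sin\rho\pi,\cos\rho\pi)$ by the same recurrence applied to the unperturbed problem $L_0$ (Lemma~\ref{lem:Delta0}). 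The remaining multilinear terms contain at least one $\varkappa_{m_k}$-factor or $C_k/\rho$ tail; I would control them using two basic Paley-Wiener facts: the product-to-sum formulas give $\cos(k\rho\pi)\cdot\varkappa_{q,odd/even}\in\varkappa_{k+q,odd/even}$ and $\sin(k\rho\pi)\cdot\varkappa_{q,odd/even}\in\varkappa_{k+q,even/odd}$, while the product of two functions in $B_{2,k_1\pi}$ and $B_{2,k_2\pi}$ lies in the exponential-type class of index $(k_1+k_2)\pi$. Careful tracking of the parity of the total number of sine factors (which must agree with the parity of $d$) then shows every mixed term lies in $\varkappa_m + C/\rho$.

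Finally, $C=0$ for $d>0$ splits naturally. When $d$ is odd, it follows from parity alone: $\Delta(\rho^2)$ is even in $\rho$, $\rho^{1-d}$ is even, and both $R_m$ and $\varkappa_m$ are even, so the odd term $C/\rho$ must vanish. When $d$ is even and positive, I would inductively inspect the constant-in-$\rho$ contributions to the expansion: by the inductive hypothesis $C_k^D = 0$ always (since $d_k^D \geq 1$) and $C_k^N = 0$ whenever $d_k > 0$, and combined with the fact that $R_{m_k}^D$ vanishes at $\rho=0$ when $d_k^D$ is even (so $R_{m_k}^D = \sin\rho\pi\cdot Q_{m_k-1}$), the only chain of contributions that could generate a surviving constant closes precisely when $d=0$. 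The main obstacle throughout is this combinatorial bookkeeping: matching the parity of sine factors to the parity of $d$, tracking the total exponential type across the multilinear expansion, and closing the constant-propagation argument for even $d>0$. The analytic content beyond the base case is otherwise routine.
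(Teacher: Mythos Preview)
Your induction for the representation \eqref{DfactR} via \eqref{Dfact} is exactly the route the paper takes (and dismisses in one sentence). Your parity argument for $C=0$ when $d$ is odd is correct and is in fact cleaner than anything the paper writes for that case.

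The gap is in the case $d>0$ even. The only possible source of a $C/\rho$ contribution in the $j$-th summand of \eqref{Dfact} is $C_j^N/\rho$ multiplied by $\prod_{k\neq j}\bigl(R_{m_k}^D+\varkappa_{m_k}^D\bigr)$, and the resulting constant is $C_j^N$ times the value of that product at $\rho=0$. You invoke only that $R_{m_k}^D(0)=0$ when $d_k^D$ is even. But if $d_j=0$ and all remaining $d_k$ are even (so every $d_k^D$ is odd) --- which is perfectly consistent with $d=\sum d_k$ even and positive, e.g.\ a central vertex joined to one Neumann boundary edge and to two subtrees each carrying two Dirichlet boundary vertices --- your stated ingredient gives nothing. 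The missing observation is that $R_{m_k}^D+\varkappa_{m_k}^D$ must vanish at $\rho=0$ whenever $d_k^D\geq 2$, simply because $\Delta_k^D(\lambda)=\rho^{\,1-d_k^D}\bigl(R_{m_k}^D+\varkappa_{m_k}^D\bigr)$ is entire in $\lambda$. With that, some $k\neq j$ has $d_k\geq 1$ (since $\sum_{k\neq j}d_k=d>0$), hence $d_k^D\geq 2$, the product vanishes at $\rho=0$, and your argument closes.

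The paper avoids this bookkeeping altogether by switching decompositions for the $C=0$ claim: instead of \eqref{Dfact}, it peels off a boundary edge $e_j$ carrying a Dirichlet condition (which exists since $d>0$) and writes
\[
\Delta(\lambda)=S_j(\pi,\lambda)\,\Delta^K(\lambda)+S_j^{[1]}(\pi,\lambda)\,\Delta^{\Pi}(\lambda).
\]
Here $\Delta^{\Pi}$ is a product of Dirichlet characteristic functions, so has $C=0$ by induction, while the extra $\rho^{-1}$ in $S_j(\pi,\lambda)$ absorbs whatever constant $\Delta^K$ might carry, since $(\sin\rho\pi+\varkappa_{1,odd})/\rho$ is itself Paley--Wiener. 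This is shorter, though your approach (once patched with the entirety remark) has the virtue of using a single factorization throughout.
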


\begin{proof}
Relation \eqref{DfactR} is easily proved by induction, using \eqref{Dsimp}, \eqref{Dfact} and \eqref{CS}.
Here we focus on the proof of the fact, that $C = 0$ for $d > 0$.
For $m = 1$ this fact follows from \eqref{CS}. Otherwise it is convenient to use the 
representation for the characteristic function $\Delta(\la)$, described below.

Suppose the relation $C = 0$ is proved for trees with less than $m$ edges of arbitrary structure with all the possible combinations
of Dirichlet and Neumann boundary conditions with $d > 0$. Consider a tree $G$ with $m$ edges.
Let $e_j$ be a boundary edge, incident to a boundary vertex $b$ and an internal vertex $u$.
Suppose that $b$ corresponds to $x_j = 0$ and the boundary condition in $b$ is $y(b) = 0$.
Then we have
\begin{equation} \label{DKP}
   \Delta(\la) = S_j(\pi, \la) \Delta^K(\la) + S_j^{[1]}(\pi, \la) \Delta^{\Pi}(\la),
\end{equation}
where $\Delta^K(\la)$ is the characteristic function of the tree $G$ without the edge $e_j$
with the standard matching conditions \eqref{MC} in the vertices $v \in \mbox{int}\, G$ and the conditions BC
in all $v \in \partial G \backslash \{ b \}$, and
$$
   \Delta^{\Pi}(\la) = \prod_{e_k \in E_v \backslash \{ e_j \}} \Delta_k^D(\la),
$$
where $\Delta_k^D(\la)$ are the characteristic functions of the subgraphs $G_k$, obtained from $G$
by splitting it at the vertex $u$, with the Dirichlet boundary conditions in $u$.
Each edge $e_k$ belongs to $G_k$.
Note that for the functions $\Delta_k^D(\la)$ the relation \eqref{DfactR} holds with $C = 0$
by the induction hypothesis. Thus, the second term in \eqref{DKP} does not contain $C$.
In view of \eqref{CS} and \eqref{DfactR}, the first term of \eqref{DKP} has the form
\begin{multline*}
   S_j(\pi, \la) \Delta^K(\rho) = \rho^{1-d} (\sin \rho \pi + \varkappa_{1, odd}(\rho))
   \left( R_{m-1}(\sin \rho \pi, \cos \rho \pi) + \varkappa_{m-1}(\rho) + \frac{C}{\rho} \right)
   \\ = \rho^{1-d} R_m(\sin \rho \pi, \cos \rho \pi) + \varkappa_m(\rho), 
\end{multline*}
where $\varkappa_{m-1}(\rho)$ and $\varkappa_m(\rho)$ can be either odd or even.
Thus, we get from \eqref{DKP}, that $C = 0$ for $\Delta(\la)$, represented in  the form \eqref{DfactR}
for an arbitrary tree $G$.

\end{proof}

Below we use the same symbol $\{ \varkappa_n \}$ for various sequences from $l_2$.
Lemmas~\ref{lem:asymptla0} and \ref{lem:Delta} yield the following result.
                                                 
\begin{lem} \label{lem:asymptla}
The eigenvalues of the boundary value problem $L$ can be numbered as 
$\{ \la_{nk} \}_{n \in \mathbb N_0, \, k = \overline{1, m}}$, in such a way that
\begin{equation} \label{asymptla1}
    \rho_{nk} = \sqrt{\la_{nk}} = \rho_{nk}^0 + o(1) = n + \al_k + o(1), \quad n \to \iy, \: k = \overline{1, m}.
\end{equation}

The numbers $\al_k$, $k = \overline{1, m}$, satisfy the properties stated in Lemma~\ref{lem:asymptla0}.
Moreover, if for a fixed $k = \overline{1, m}$ we have $\al_k \ne \al_j$ for all
$j = \overline{1, m} \backslash \{k\}$, then 
\begin{equation} \label{asymptla2}
   \rho_{nk} = n + \al_k + \varkappa_n, \quad n \in \mathbb N.
\end{equation}

The numbers of occurrences of the values in the sequence $\{ \la_{nk} \}$ coincide with the
multiplicities of the corresponding eigenvalues for $n \in \mathbb N$ and for $n = 0$, $\al_k \ne 0$.
The number of the remaining eigenvalues equals $z_{\al} + 1 - d$.
\end{lem}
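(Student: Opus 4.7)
The plan is to combine Lemma~\ref{lem:Delta} with the description of the zeros of $\Delta_0$ from Lemma~\ref{lem:asymptla0} and extract both the eigenvalue asymptotics and the multiplicity count via Rouch\'e's theorem applied to the decomposition $\Delta(\la)=\Delta_0(\la)+\rho^{1-d}(\varkappa_m(\rho)+C/\rho)$.

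First I will fix a sufficiently small $\de>0$ so that the discs $|\rho-(n+\al_k)|<\de$ are mutually disjoint for distinct limit points $\al_k \pmod 1$, with coinciding $\al_k$'s collapsed into a single disc counted with multiplicity. On the boundary $\Gamma_{n,k}=\{|\rho-(n+\al_k)|=\de\}$, the $1$-periodicity of $R_m(\sin\rho\pi,\cos\rho\pi)$ together with \eqref{RQ} provides a lower bound $|R_m(\sin\rho\pi,\cos\rho\pi)|\ge c_\de>0$ independent of $n$. Since $\varkappa_m\in B_{2,m\pi}$, the Riemann--Lebesgue lemma yields $\varkappa_m(\rho)=o(1)$ along horizontal lines and $C/\rho=O(n^{-1})$; hence $|\varkappa_m(\rho)+C/\rho|<|R_m(\sin\rho\pi,\cos\rho\pi)|$ on $\Gamma_{n,k}$ for all large $n$, and Rouch\'e's theorem applied to \eqref{DfactR} shows that each contour encloses the same number of zeros of $\Delta$ as of $\Delta_0$. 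This gives \eqref{asymptla1} together with the multiplicity count for $n\in\mathbb N$; the zero-eigenvalue multiplicity $z_\al+1-d$ follows from the same argument on a fixed disc around the origin, using the explicit factor $\rho^{1-d}$ in \eqref{DfactR} and Lemma~\ref{lem:Delta0}.

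For the refined asymptotic \eqref{asymptla2}, the separation hypothesis $\al_k\ne\al_j$ for $j\ne k$ makes $n+\al_k$ a simple zero of $R_m(\sin\rho\pi,\cos\rho\pi)$, and by periodicity $[R_m(\sin\rho\pi,\cos\rho\pi)]'|_{\rho=n+\al_k}$ is bounded away from zero uniformly in $n$. Expanding $\Delta_0$ to first order about $n+\al_k$ in $\Delta(\rho_{nk}^2)=0$, using the already-established $o(1)$ estimate to absorb the quadratic remainder, and using the Bernstein bound $\|\varkappa_m'\|_\infty\le m\pi\|\varkappa_m\|_\infty<\infty$ to replace $\rho_{nk}$ by $n+\al_k$ inside $\varkappa_m$, reduces \eqref{asymptla2} to the claim $\{\varkappa_m(n+\al_k)\}_{n\in\mathbb N}\in l_2$. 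Writing $\varkappa_m$ in its Paley--Wiener form $\int_0^{m\pi}\mathscr K(t)\sin\rho t\,dt$ or $\int_0^{m\pi}\mathscr N(t)\cos\rho t\,dt$ with $\mathscr K,\mathscr N\in L_2(0,m\pi)$, this becomes a Bessel inequality for the exponential system $\{e^{i(n+\al_k)t}\}_{n\in\mathbb Z}$ on $L_2(0,m\pi)$; since the frequencies are separated by exactly $1$, the classical Plancherel--Polya estimate supplies the required upper bound.

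The main technical point is this last Bessel step: the sampling rate $1$ is coarser than the Nyquist rate $1/m$ for exponential type $m\pi$, so $\{e^{i(n+\al_k)t}\}$ is neither complete nor a frame in $L_2(0,m\pi)$; what saves the argument is that only the \emph{upper} frame bound is required, and this holds for any uniformly separated sequence of real frequencies on a bounded interval. The remaining items---the pairing $\al_{k_1}\leftrightarrow 1-\al_{k_1}$ from Lemma~\ref{lem:asymptla0} and the bookkeeping of multiplicities---follow directly from the two preceding lemmas.
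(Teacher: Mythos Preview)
Your proof is correct and follows the same route as the paper's: Rouch\'e's theorem on discs about $n+\al_k$ for \eqref{asymptla1}, then a first-order Taylor expansion of the periodic function $R_m(\sin\rho\pi,\cos\rho\pi)$ at its simple zero $\al_k$ for \eqref{asymptla2}. You supply the Plancherel--Polya justification for $\{\varkappa_m(n+\al_k)\}\in l_2$ that the paper absorbs into its blanket convention on $\{\varkappa_n\}$, but otherwise the arguments coincide.
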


\begin{proof}
Asymptotic formula \eqref{asymptla1} can be obtained from \eqref{DfactR} 
by the standard technique, based on Rouch\'e's
theorem (see, for example, \cite[Theorem~1.1.3]{FY01}). 

Fix $k = \overline{1, m}$ such that $\al_k \ne \al_j$, 
$j = \overline{1, m} \backslash \{k\}$.
Substituting 
$$
   \rho = \rho_{nk} = n + \al_k + \eps_n, \quad \eps_n = o(1),
$$
into \eqref{DfactR}, we get
$$
   \Delta(\la_{nk}) = \rho_{nk}^{1-d}(R(n + \al_k + \eps_n) + \varkappa_n) = 0, \quad n \in \mathbb N_0,
$$
where $R(\rho) := R_m(\sin \rho \pi, \cos \rho \pi)$. This implies
$R(\al_k + \eps_n) = \varkappa_n$, $n \in \mathbb N_0$.
Since the function $R(\rho)$ is analytic in the neighborhood of $\al_k$, the following expansion is valid
$$
    R(\al_k + z) = R(\al_k) + R'(\al_k) z + O(z^2), \quad |z| \to 0.
$$
Note that $R(\al_k) = 0$ and $R'(\al_k) \ne 0$, since $\al_k$ is a simple root of $R(\rho)$.
Thus, we obtain $\eps_n = \varkappa_n$, $n \in \mathbb N_0$.
\end{proof}

\bigskip

{\bf \large 3. Partial inverse problem}

\bigskip

We suppose that the tree $G$ is divided into two connected subtrees $G_{known}$ and $G_{unknown}$,
having the common vertex $w \in \mbox{int} \, G$ (see Figure~\ref{img:1}). Denote their sets of edges
by $E_{known}$ and $E_{unknown}$, respectively. Suppose that we are given the functions
$\sigma_j$ for all $e_j \in E_{known}$ and would like to recover $\sigma_j$
for all $e_j \in E_{unknown}$. The constants $\ga_j$, $j = \overline{1, m}$, are supposed to be known. 
We shall give a rigorous formulation of the partial inverse problem later in this section (see Problem~\ref{prob:pip}), but before we should define the spectral data for recovering the potentials on the edges $E_{unknown}$.

\begin{figure}[h!]
\centering
\begin{tikzpicture}
\filldraw (0, 0) circle (2pt) node[anchor=south]{$w$};
\draw[dashed] (0, 0) edge [bend left] (1, 2);
\draw[dashed] (1, 2) edge [bend left] (4, 0.5);
\draw[dashed] (0, 0) edge [bend right] (0.1, -3);
\draw[dashed] (0.1, -3) edge [bend right] (4, -2);
\draw[dashed] (4, -2) arc(-35:35:2.17);
\draw (2.8, -1.8) node{$G_{unknown}$};
\filldraw (1, 1) circle (1pt);
\draw (0, 0) edge (1, 1);
\draw (0.6, 0.8) node{$G_1$};
\draw[dashed] (0, 0) edge [bend left] (0.9, 1.4);
\draw[dashed] (0, 0) edge [bend right] (1.5, 1);
\draw[dashed] (1.5, 1) arc (-20:120:0.35);
\filldraw (1.5, -0.3) circle (1pt);
\draw (0, 0) edge (1.5, -0.3);
\draw (1.4, 0) node{$G_2$}; 
\filldraw (2.5, -0.8) circle (1pt);
\draw (1.5, -0.3) edge (2.5, -0.8);
\filldraw (3, 0) circle (1pt);
\draw (1.5, -0.3) edge (3, 0);
\filldraw (3.3, -0.5) circle (1pt);
\draw (2.5, -0.8) edge (3.3, -0.5);
\filldraw (3.6, -0.8) circle (1pt);
\draw (2.5, -0.8) edge (3.6, -0.8);
\filldraw (3.3, -1.1) circle (1pt);
\draw (2.5, -0.8) edge (3.3, -1.1);
\draw[dashed] (0, 0) edge [bend left] (3.5, -0.1);
\draw[dashed] (0, 0) edge [bend right] (3.5, -1.3);
\draw[dashed] (3.5, -1.3) arc (-80:80:0.6);	
\filldraw (0.5, -1) circle (1pt);
\draw (0, 0) edge (0.5, -1);
\filldraw (1.3, -2) circle (1pt);
\draw (0.5, -1) edge (1.3, -2);
\filldraw (0.7, -2) circle (1pt);
\draw (0.5, -1) edge (0.7, -2);
\draw[dashed] (0, 0) edge [bend left] (1.7, -2.5);
\draw[dashed] (0, 0) edge [bend right] (0.3, -2.5);
\draw[dashed] (0.3, -2.5) arc (210:330:0.8);
\draw (0.2, -1) node{$G_3$};
\filldraw (-1.5, -0.5) circle (1pt);
\draw (0, 0) edge (-1.5, -0.5);
\filldraw (-1.5, 0.5) circle (1pt);
\draw (0, 0) edge (-1.5, 0.5);
\filldraw (-2.5, 1) circle (1pt);
\draw (-1.5, 0.5) edge (-2.5, 1);
\filldraw (-2.8, 0.5) circle (1pt);
\draw (-1.5, 0.5) edge (-2.8, 0.5);
\filldraw (-2.5, 0) circle (1pt);
\draw (-1.5, 0.5) edge (-2.5, 0);
\filldraw (-2.7, -0.5) circle (1pt);
\draw (-1.5, -0.5) edge (-2.7, -0.5);
\filldraw (-2.5, -1) circle (1pt);
\draw (-1.5, -0.5) edge (-2.5, -1);
\filldraw (-3.5, 0) circle (1pt);
\draw (-2.5, 0) edge (-3.5, 0);
\filldraw (-3.7, 1.3) circle (1pt);
\draw (-2.5, 1) edge (-3.7, 1.3);
\filldraw (-3.5, 0.8) circle (1pt);
\draw (-2.5, 1) edge (-3.5, 0.8);
\filldraw (-4.5, 0.3) circle (1pt);
\draw (-3.5, 0) edge (-4.5, 0.3);
\filldraw (-4.5, -0.3) circle (1pt);
\draw (-3.5, 0) edge (-4.5, -0.3);
\draw[dashed] (0, 0) edge [bend left] (-4, -1.5);
\draw[dashed] (0, 0) edge [bend right] (-4, 1.5);
\draw[dashed] (-4, 1.5) arc(100:260:1.5);
\draw (-3.7, -1) node{$G_{known}$};
\end{tikzpicture}
\caption{Graph $G$}
\label{img:1}
\end{figure}

Denote by $\Delta_{known}^K(\la)$ and $\Delta_{unknown}^K(\la)$ the characteristic functions
for the subtrees $G_{known}$ and $G_{unknown}$, respectively. We suppose that
for the both subtrees, there are standard matching conditions \eqref{MC} in all $v \in \mbox{int}\, G$
and the conditions BC in all $v \in \partial G$. 
Let us split the vertex $w$ of the subtree $G_{unknown}$, and get the subtrees $G_j$, 
$j = \overline{1, p}$. Let $\Delta_j^D(\la)$ be the characteristic functions for $G_j$
with the Dirichlet boundary condition in $w$, conditions \eqref{MC} in all $v \in \mbox{int}\, G_j$ and BC in all $v \in \partial G_j \backslash \{ w \}$.  Define
\begin{equation} \label{DPiprod}
   \Delta_{unknown}^{\Pi}(\la) = \prod_{j = 1}^p \Delta_j^D(\la),
\end{equation}
and similarly define $\Delta_{known}^{\Pi}(\la)$.
In view of the introduced notations, relation \eqref{DfactR} implies
\begin{equation} \label{DeltaPiK}
   \Delta(\la) = \Delta_{known}^K (\la) \Delta_{unknown}^{\Pi}(\la) 
   + \Delta_{known}^{\Pi}(\la) \Delta_{unknown}^K (\la).
\end{equation}

Let $l$, $l_j$, $j = \overline{1, p}$, be the numbers of edges and $r$, $r_j$, $j = \overline{1,p}$,
be the numbers of the Dirichlet boundary conditions for the subtrees $G_{unknown}$ and
$G_j$, $j = \overline{1, p}$, respectively. Obviously,
\begin{equation} \label{sumlr}
   l = \sum_{j = 1}^p l_j, \quad r + p = \sum_{j = 1}^p r_j.
\end{equation}

For definiteness, consider the case of odd $r$. 
The cases of positive even $r$ and of $r = 0$ can be studied similarly.
By Lemma~\ref{lem:Delta} we have
\begin{equation} \label{DK}
   \Delta_{unknown}^K(\la) = \rho^{1-r} \left( R_l^K(\sin \rho \pi, \cos \rho \pi) + \int_0^{l\pi} N(t) \cos \rho t \, dt \right), \quad N \in L_2(0, l\pi),
\end{equation} 
\begin{equation} \label{DjD}
   \Delta_j^D(\la) = \rho^{1 - r_j} (R_{l_j,j}(\sin \rho \pi, \cos \rho \pi) + \varkappa_{l_j,j}(\rho)),
\end{equation}
where $R_l^K$ and $R_{l_j, j}$, $j = \overline{1, p}$, 
are the polynomials of degrees $l$ and $l_j$, $j = \overline{1, p}$, respectively,
of the form described in Lemma~\ref{lem:Delta0}, and
$$
    \varkappa_{l_j, j}(\rho) = \begin{cases}
                                   \varkappa_{l_j, odd}(\rho), \quad \text{if $r_j$ is even}, \\
                                   \varkappa_{l_j, even}(\rho), \quad \text{if $r_j$ is odd}.
                               \end{cases}
$$
Substituting \eqref{DjD} into \eqref{DPiprod} and using \eqref{sumlr}, we obtain
\begin{equation} \label{DPi}
   \Delta_{unknown}^{\Pi}(\la) = \rho^{-r} \left(R_l^{\Pi}(\sin \rho \pi, \cos \rho \pi) + 
   \int_0^{l \pi} K(t) \sin \rho t \, dt \right), \quad K \in L_2(0, l\pi),
\end{equation}
where 
$$
   R_l^{\Pi}(\sin \rho \pi, \cos \rho \pi) = \prod_{j = 1}^p R_{l_j, j}(\sin \rho \pi, \cos \rho \pi).
$$

Denote by $\Lambda(L)$ the spectrum of the problem $L$. For simplicity, we assume that all the eigenvalues in $\Lambda(L)$ are positive.
This condition can be achieved by a shift of the spectrum: $\la \mapsto \la + C$, $\sigma_j \mapsto \sigma_j + C x_j$, $j = \overline{1, m}$.

Substitute relations \eqref{DK} and \eqref{DPi} into \eqref{DeltaPiK} for $\la \in \Lambda(L)$:
\begin{multline*}
    \Delta_{known}^K (\la) \left( R_l^{\Pi}(\sin \rho \pi, \cos \rho \pi) + \int_0^{l \pi} K(t) \sin \rho t \, dt  \right)
    \\ + \Delta_{known}^{\Pi}(\la) \left( \rho R_l^K (\sin \rho \pi, \cos \rho \pi) + \int_0^{l \pi} N(t) \cos \rho t \, dt \right) = 0. 
\end{multline*}
Rewrite this relation in the following form
\begin{gather} \label{main}
    \Delta_{known}^K(\la) \int_0^{l \pi} K(t) \frac{\sin \rho t}{\rho} \, dt + \Delta_{known}^{\Pi}(\la) \int_0^{l \pi} N(t) \cos \rho t \, dt = g(\la), \quad \la \in \Lambda(L), \\
    \label{defg}
    g(\la) := -\frac{1}{\rho}\Delta_{known}^K(\la) R_l^{\Pi}(\sin \rho \pi, \cos \rho \pi) - \Delta_{known}^{\Pi}(\la) R_l^K(\sin \rho \pi, \cos \rho \pi).
\end{gather}

Consider the vector functions
\begin{equation} \label{defs}
   f(t) = \begin{bmatrix}
               K(t) \\ N(t) 
          \end{bmatrix}, \quad  
   s(t, \la) = \begin{bmatrix}
                  \Delta_{known}^K(\la) \rho^{-1} \sin \rho t, \\ \Delta_{known}^{\Pi}(\la) \cos \rho t
               \end{bmatrix}, \quad \rho > 0,
\end{equation}
belonging to the real Hilbert space $\mathcal{H} := L_2(0, l\pi) \oplus L_2(0, l\pi)$ as functions of $t$.
The scalar product and the norm in $\mathcal H$ are defined as follows
$$
   (g, h)_{\mathcal H} = \int_0^{l\pi} ( g_1(t) h_1(t) + g_2(t) h_2(t)) \, dt, \quad
   \| g \|_{\mathcal H} = \sqrt{\int_0^{l\pi} (g_1^2(t) + g_2^2(t)) \, dt}, 
$$
$$
   g = \begin{bmatrix} g_1 \\ g_2 \end{bmatrix}, \quad h = \begin{bmatrix} h_1 \\ h_2 \end{bmatrix}, \quad g, h \in \mathcal H.    
$$
Then relation \eqref{main} takes the form
\begin{equation} \label{scal}
    (f(t), s(t, \la))_{\mathcal H} = g(\la).
\end{equation}

Using \eqref{DK} and \eqref{DPi}, one can derive additional relations in the form similar to \eqref{scal}. Indeed, since the characteristic functions $\Delta_{unknown}^K(\la)$, $\Delta_{unknown}^{\Pi}(\la)$ and their analogs for the boundary value problem $L_0$ with the zero potentials are entire, we get that the functions
\begin{align*}
\rho^{1-r} \int_0^{l\pi} N(t) \cos \rho t \, dt & = \rho^{1-r} \int_0^{l\pi} N(t) \left( \sum_{j = 0}^{\iy} \frac{(-1)^j (\rho t)^{2 j}}{(2 j)!}\right) \, dt \\
\rho^{-r} \int_0^{l\pi} K(t) \sin \rho t \, dt & = \rho^{-r} \int_0^{l\pi} K(t) \left( \sum_{j = 0}^{\iy} \frac{(-1)^j (\rho t)^{2 j + 1}}{(2 j + 1)!} \right) \, dt 
\end{align*}
are analytic at zero. 
Consequently,
\begin{equation} \label{cond0}
\int_0^{l\pi} N(t) t^{2 j} \, dt = 0, \quad \int_0^{l\pi} K(t) t^{2 j + 1} \, dt = 0, \quad j = \overline{0, (r-3)/2}.
\end{equation}
Define the vector functions
\begin{equation} \label{defsj}
s_j(t) = \begin{bmatrix} 0 \\ t^j \end{bmatrix}, \quad \text{if $j$ is even}, \qquad 
s_j(t) = \begin{bmatrix} t^j \\ 0 \end{bmatrix}, \quad \text{if $j$ is odd},
\end{equation}
and rewrite equations \eqref{cond0} in the form
\begin{equation} \label{scal0}
(f(t), s_j(t))_{\mathcal H} = 0, \quad j = \overline{0, r-2}.
\end{equation}

Denote by $\Lambda'(L)$ an arbitrary subset of $\Lambda(L)$, such that the sequence
\begin{equation} \label{defS}
\mathcal S := \{ s(t, \la) \}_{\la \in \Lambda'(L)} \cup \{ s_j(t) \}_{j = 0}^{r-2}
\end{equation}
is complete in $\mathcal H$. Note that
if a subset $\Lambda'(L)$ exists, it is countable. The following auxiliary problem plays a crucial role in the further investigation.

\begin{prob} \label{prob:ap}
Given the functions $\sigma_j$ for all $e_j \in E_{known}$ and 
the set $\Lambda'(L)$, construct the characteristic functions $\Delta_{unknown}^K(\la)$ and $\Delta_{unknown}^{\Pi}(\la)$.
\end{prob}

Along with $L$ we consider the boundary value problem $\tilde L$ of the same form,
but with different coefficients $\tilde \sigma_j$, $j = \overline{1, m}$. 
We agree that if a certain symbol $\ga$ denotes an object related to $L$,
the symbol $\tilde \ga$ with tilde denotes the similar object related to $\tilde L$.

The following theorem asserts the uniqueness for the solution of Problem~\ref{prob:ap}.

\begin{thm} \label{thm:uniqap}
Suppose that $\sigma_j = \tilde \sigma_j$ for all $e_j \in E_{known}$ and 
there exist subsets of the spectra $\Lambda'(L)$ and $\Lambda'(\tilde L)$, such that 
$\Lambda'(L) = \Lambda'(\tilde L)$. Then $\Delta_{unknown}^K(\la) \equiv \tilde \Delta_{unknown}^K(\la)$
and $\Delta_{unknown}^{\Pi}(\la) \equiv \tilde \Delta_{unknown}^{\Pi}(\la)$, $\la \in \mathbb C$.
\end{thm}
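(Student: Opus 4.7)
The plan is to transport the hypothesis $\sigma_j=\tilde\sigma_j$ on $E_{known}$ into identities between the "known" parts of the bilinear setup around \eqref{scal} and \eqref{scal0}, and then exploit the completeness of $\mathcal S$ to conclude equality of the "unknown" data $f=(K,N)^{T}$. The key preliminary observation is structural: in the representations \eqref{DK} and \eqref{DPi}, the polynomials $R_l^{K}$ and $R_l^{\Pi}$ depend only on the combinatorial structure of $G_{unknown}$ (this is Lemma~\ref{lem:Delta0} applied to the auxiliary zero-potential problems, and it is the same for $L$ and $\tilde L$ since the tree is fixed). Therefore the hypothesis on $E_{known}$ gives
$$
\Delta_{known}^{K}(\la)\equiv\tilde\Delta_{known}^{K}(\la), \qquad
\Delta_{known}^{\Pi}(\la)\equiv\tilde\Delta_{known}^{\Pi}(\la),
$$
and hence, via \eqref{defs} and \eqref{defg}, $s(t,\la)\equiv\tilde s(t,\la)$ for every $\la$ and $g(\la)\equiv\tilde g(\la)$.

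Next I would write \eqref{scal} for both problems at each $\la\in\Lambda'(L)=\Lambda'(\tilde L)$ and subtract. Since the right-hand sides and the test vectors coincide, we obtain
$$
(f(t)-\tilde f(t),\,s(t,\la))_{\mathcal H}=0,\qquad \la\in\Lambda'(L).
$$
In parallel, the analytic-at-zero argument leading to \eqref{cond0}--\eqref{scal0} applies verbatim to both $L$ and $\tilde L$, because it uses only that $\Delta_{unknown}^{K}$ and $\Delta_{unknown}^{\Pi}$ are entire together with their zero-potential analogues; so
$$
(f(t)-\tilde f(t),\,s_j(t))_{\mathcal H}=0,\qquad j=\overline{0,r-2}.
$$
Combining these two families of orthogonality relations means $f-\tilde f$ is orthogonal to every element of $\mathcal S$. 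By the hypothesized completeness of $\mathcal S$ in $\mathcal H$, we conclude $f=\tilde f$ in $\mathcal H$, i.e. $K=\tilde K$ and $N=\tilde N$ almost everywhere on $(0,l\pi)$.

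Finally I would substitute these equalities back into the representations \eqref{DK} and \eqref{DPi}. Since $R_l^K$ and $R_l^\Pi$ coincide with their tildes by the structural remark above, and the remaining integral terms are determined by $N$ and $K$, this gives
$$
\Delta_{unknown}^{K}(\la)\equiv\tilde\Delta_{unknown}^{K}(\la),\qquad
\Delta_{unknown}^{\Pi}(\la)\equiv\tilde\Delta_{unknown}^{\Pi}(\la)
$$
as entire functions of $\la$, which is the desired conclusion. For definiteness the argument is written under the odd-$r$ convention fixed before \eqref{DK}; the cases of positive even $r$ and of $r=0$ are essentially identical, the only adjustments being the parity of the integral kernels in \eqref{DK}, \eqref{DPi} and the corresponding range of $j$ in \eqref{scal0}. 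The only genuinely nontrivial ingredient here is the completeness of $\mathcal S$, which is assumed in this abstract theorem and whose verification for concrete choices of $\Lambda'(L)$ is deferred to Section~4; the remainder of the argument is a routine scalar-product manipulation.
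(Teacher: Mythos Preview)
Your proof is correct and follows essentially the same route as the paper: identify the known characteristic functions and hence $s(t,\la)$, $g(\la)$ with their tilded versions, subtract the relations \eqref{scal} and \eqref{scal0}, and invoke completeness of $\mathcal S$ to force $f=\tilde f$, then read off the conclusion from \eqref{DK}--\eqref{DPi}. You are slightly more explicit than the paper in noting that $R_l^K$ and $R_l^\Pi$ are determined by the zero-potential problem on $G_{unknown}$ and hence automatically coincide with their tildes, which is a helpful clarification.
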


\begin{proof}
Since $\sigma_j = \tilde \sigma_j$ for $e_j \in E_{known}$, we have 
$$
\Delta_{known}^K(\la) \equiv \tilde \Delta_{known}^K(\la), \quad 
\Delta_{known}^{\Pi}(\la) \equiv \tilde \Delta_{known}^{\Pi}(\la).
$$ 
Thus, in view definitions \eqref{defg} and \eqref{defs},
we have $g(\la) = \tilde g(\la)$, $s(t, \la) = \tilde s(t, \la)$, $\la \in \Lambda'(L)$.
Since the system $\mathcal S$ is complete in $\mathcal H$,
relations \eqref{scal} and \eqref{scal0} yield $f = \tilde f$, i.e. $K = \tilde K$ and $N = \tilde N$
in $L_2(0, l\pi)$. Using \eqref{DK} and \eqref{DPi}, we arrive at the assertion of the theorem.
\end{proof}

If the system $\mathcal S$ is 
an unconditional basis in $\mathcal H$, i.e. the normalized system
$$
\{ s(t, \la) / \| s(t, \la) \|_{\mathcal H} \}_{\la \in \Lambda'(L)} \cup \{ s_j(t) / \| s_j(t) \|_{\mathcal H} \}_{j = 0}^{r-2}
$$
is a Riesz basis, one can use the following algorithm for the solution of Problem~\ref{prob:ap}.

\begin{alg} \label{alg:ap}
Let $\sigma_j$ for $e_j \in E_{known}$ and the set $\Lambda'(L)$ be given.

\begin{enumerate}
\item Using $\sigma_j$ for $e_j \in E_{known}$, construct the characteristic 
functions $\Delta_{known}^K(\la)$ and $\Delta_{known}^{\Pi}(\la)$.

\item Construct the trigonometric polynomials by formulas
$$
R_l^K(\sin \rho \pi, \cos \rho \pi) = \rho^{r-1} \Delta_{unknown, 0}^K(\la), \quad
R_l^{\Pi}(\sin \rho \pi, \cos \rho \pi) = \rho^r \Delta_{unknown, 0}^{\Pi}(\la),
$$
where $\Delta_{unknown, 0}^K(\la)$ and $\Delta_{unknown, 0}^{\Pi}(\la)$ are the characteristic functions for the graph $G_{unknown}$ with the zero potential.

\item Construct the numbers $g(\la)$ and the vector functions $s(t, \la)$ for $\la \in \Lambda'(L)$ by 
\eqref{defg}, \eqref{defs}, and the vector functions $s_j(t)$ by \eqref{defsj}.

\item Recover the vector function $f(t)$ from its coordinates with respect to the Riesz basis
(see relations \eqref{scal}, \eqref{scal0}), i.e. find $K(t)$, $N(t)$.

\item Find the functions $\Delta_{unknown}^K(\la)$ and $\Delta_{unknown}^{\Pi}(\la)$, using \eqref{DK} and \eqref{DPi}.
\end{enumerate}
\end{alg}

Now we describe, how the functions $\sigma_j$ on the tree $G_{unknown}$ can be recovered.
Put $\partial G_{unknown} \backslash \{ w \} =: \{ v_k \}_{k = 1}^b$. For each $k = \overline{1, b}$,
let $L_k$ be the boundary value problem for the system of equations \eqref{eqv} on the graph $G$ with the standard matching conditions in the internal vertices
$v \in \mbox{int} \, G$ and the conditions BC in the vertices $v \in \partial G \backslash \{ v_k \}$.
In the vertex $v_k$, the boundary condition is different from the one in BC, i.e. if the problem $L$ has 
the Dirichlet condition $y(v_k) = 0$, then the problem $L_k$ has the Neumann one: $y^{[1]}(v_k) = 0$, and vice versa.

Fix $k = \overline{1, b}$.
Problem~\ref{prob:ap} can be formulated and solved for the boundary value problems $L_k$ with necessary modifications.
Namely, we supposed above for definiteness, that the number of the boundary vertices among $\{ v_k \}_{k = 1}^b$
with the Dirichlet boundary conditions (denoted by $r$) is odd. For the problems $L_k$, this number is even.
If it is positive, repeating the arguments above, we obtain the following vector functions
instead of $ s(t, \la)$ and $s_j(t, \la)$:
$$
   s_k(t, \la) = \begin{bmatrix}
                  \Delta_{known}^K(\la) \cos \rho t, \\ \Delta_{known}^{\Pi}(\la) \rho \sin \rho t
               \end{bmatrix}, \quad
  s_{jk}(t) = \begin{bmatrix} t^j \\ 0 \end{bmatrix}, \: \text{if $j$ is even}, \quad
  s_{jk}(t) = \begin{bmatrix} 0 \\ t^j \end{bmatrix}, \: \text{if $j$ is odd}.	
$$ 

Let $\Lambda'(L_k)$ be a subset of the spectrum $\Lambda(L_k)$, such that the system of vector functions
$$
\mathcal S_k := \{ s_k(t, \la) \}_{\la \in \Lambda'(L_k)} \cup \{ s_{jk}(t) \}_{j = 0}^{r_k - 2}
$$
is complete in $\mathcal H$. Denote by $\Delta_{unknown, k}^K(\la)$ and
$\Delta_{unknown, k}^{\Pi}(\la)$ the analogs of the characteristic functions
$\Delta_{unknown}^K(\la)$ and $\Delta_{unknown}^{\Pi}(\la)$ for the problem $L_k$.
Clearly, $\Delta_{unknown, k}^K(\la)$ and $\Delta_{unknown, k}^{\Pi}(\la)$ are uniquely specified by $\sigma_j$ for $e_j \in E_{known}$ and the subspectrum $\Lambda'(L_k)$. If $\mathcal S_k$
is an unconditional basis in $\mathcal H$, $\Delta_{unknown, k}^K(\la)$ and $\Delta_{unknown, k}^{\Pi}(\la)$ can be constructed
by Algorithm~\ref{alg:ap} with necessary modifications.

The following problem is, in fact, the complete inverse problem by the system of spectra for the tree $G_{unknown}$, if $p > 1$. 

\begin{prob} \label{prob:ip}
Given the characteristic functions $\Delta_{unknown}^K(\la)$, $\Delta_{unknown, k}^K(\la)$, $k = \overline{1, b-1}$,
find the potentials $\sigma_j$ for $e_j \in E_{unknown}$.
\end{prob}

Note that the number of required characteristic functions $\Delta_{unknown, k}^K(\la)$ is one less than the number of the boundary vertices in $G_{unknown}$.
Problem~\ref{prob:ip} has been studied in \cite{Yur05, Yur06, FIY08}, where the uniqueness theorem 
and the constructive algorithm for its solution, based on the method of spectral mappings \cite{FY01, Yur02}, have been obtained.
Combining these methods with the ones developed for Problem~\ref{prob:ap}, we arrive at the solution of the following partial inverse problem.

\begin{prob} \label{prob:pip}
Given the potentials $\sigma_j$ for $e_j \in E_{known}$ and the subspectra $\Lambda'(L)$, $\Lambda'(L_k)$, $k = \overline{1, b-1}$,
find $\sigma_j$ for $e_j \in E_{unknown}$.
\end{prob}

Theorem~\ref{thm:uniqap} together with the uniqueness of the solution of Problem~\ref{prob:ip}, imply the following uniqueness theorem for Problem~\ref{prob:pip}.

\begin{thm} \label{thm:uniq}
Suppose that $\sigma_j = \tilde \sigma_j$ for $e_j \in E_{known}$ and there exist subspectra
$\Lambda'(L)$, $\Lambda'(\tilde L)$, $\Lambda'(L_k)$, $\Lambda'(\tilde L_k)$, $k = \overline{1, b-1}$,
such that $\Lambda'(L) = \Lambda'(\tilde L)$, $\Lambda'(L_k) = \Lambda'(\tilde L_k)$, $k = \overline{1, b-1}$.
Then $\sigma_j = \tilde \sigma_j$ for $e_j \in E_{unknown}$ in $L_2(0, \pi)$.
\end{thm}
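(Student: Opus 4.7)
The plan is to reduce Theorem~\ref{thm:uniq} to the already established uniqueness results for the two intermediate problems: Problem~\ref{prob:ap} (handled by Theorem~\ref{thm:uniqap}) and Problem~\ref{prob:ip} (handled in \cite{Yur05, Yur06, FIY08}). No new analytic work is required; the task is to stitch these together consistently.

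First I would apply Theorem~\ref{thm:uniqap} directly to the pair $(L, \tilde L)$. Since $\sigma_j = \tilde\sigma_j$ on $E_{known}$ and $\Lambda'(L) = \Lambda'(\tilde L)$, the theorem yields
\[
   \Delta_{unknown}^K(\la) \equiv \tilde\Delta_{unknown}^K(\la), \qquad
   \Delta_{unknown}^{\Pi}(\la) \equiv \tilde\Delta_{unknown}^{\Pi}(\la), \quad \la \in \mathbb C.
\]
Next, I would apply the same argument, with the modifications indicated after Problem~\ref{prob:ap} (using $s_k(t, \la)$ and $s_{jk}(t)$ in place of $s(t, \la)$ and $s_j(t)$), to each pair $(L_k, \tilde L_k)$ for $k = \overline{1, b-1}$. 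The hypotheses $\sigma_j = \tilde\sigma_j$ on $E_{known}$ and $\Lambda'(L_k) = \Lambda'(\tilde L_k)$, together with the completeness of the corresponding system $\mathcal S_k$ in $\mathcal H$, yield
\[
   \Delta_{unknown, k}^K(\la) \equiv \tilde\Delta_{unknown, k}^K(\la), \quad k = \overline{1, b-1}.
\]

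Finally, I would invoke the uniqueness theorem for Problem~\ref{prob:ip}, i.e.\ the complete inverse problem on the tree $G_{unknown}$ by the system of $b$ characteristic functions, as proved by Yurko and coauthors in \cite{Yur05, Yur06, FIY08}. Since the collection $\{\Delta_{unknown}^K(\la), \Delta_{unknown, k}^K(\la) : k = \overline{1, b-1}\}$ agrees with the corresponding collection for $\tilde L$, that theorem gives $\sigma_j = \tilde\sigma_j$ in $L_2(0, \pi)$ for every $e_j \in E_{unknown}$, which is exactly the conclusion.

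The only point that requires care, and which I would expect to be the main place where a reader might hesitate, is verifying that the argument of Theorem~\ref{thm:uniqap} genuinely transfers to the auxiliary problems $L_k$: the parity of the number $r_k$ of Dirichlet conditions in $\partial G_{unknown} \setminus \{w\}$ for $L_k$ differs from that of $r$ for $L$ (since the boundary condition at $v_k$ is flipped), so the roles of the sine and cosine representations in \eqref{DK} and \eqref{DPi} are interchanged. This is precisely the modification described in the paragraph introducing $s_k(t, \la)$ and $s_{jk}(t)$, so the proof of Theorem~\ref{thm:uniqap} goes through verbatim once one substitutes these vector functions, and the reduction is complete.
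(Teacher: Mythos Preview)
Your proposal is correct and follows essentially the same approach as the paper: the paper simply states that Theorem~\ref{thm:uniqap} together with the uniqueness of the solution of Problem~\ref{prob:ip} implies Theorem~\ref{thm:uniq}, and your proof spells out precisely this reduction (with the additional care about the parity of $r_k$ and the modified vector functions $s_k$, $s_{jk}$ for the auxiliary problems $L_k$).
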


If $p > 1$ and the systems $\mathcal S$, $\mathcal S_k$, $k = \overline{1, b-1}$,
are unconditional bases in $\mathcal H$, i.e. the corresponding normalized systems are Riesz bases, one can solve Problem~\ref{prob:pip}, using the following algorithm.

\begin{alg} \label{alg:pip}
Let the functions $\sigma_j$ for $e_j \in E_{known}$ and the subspectra $\Lambda'(L)$, $\Lambda'_k(L)$, $k = \overline{1, b-1}$, be given.

\begin{enumerate}
\item Find the characteristic functions $\Delta_{unknown}^K(\la)$, $\Delta_{unknown, k}^K(\la)$, $k = \overline{1, b-1}$, 
using Algorithm~\ref{alg:ap}.

\item Recover the potentials $\sigma_j$ for $e_j \in E_{unknown}$, solving Problem~\ref{prob:ip} by the methods from \cite{Yur05, Yur06, FIY08}.
\end{enumerate}
\end{alg}

In fact, relying only on the methods of the works \cite{Yur05, Yur06, FIY08}, one can easily construct the potentials on the 
subtree $G_{unknown}$, using the full spectra $\Lambda(L)$, $\Lambda(L_k)$, $k = \overline{1, b-1}$. The principal novelty of the present paper is
that we can reduce the given data to some subspectra, given the potentials on the part $G_{known}$ a priori. 
This idea will be revealed in more details in the next sections.

\begin{remark} \label{rem:1edge}
Consider the case $p = 1$, when there is the only edge (denote it by $e_1$), incident to the vertex $w$ in the subtree $G_{unknown}$.
Then, solving Problem~\ref{prob:ap} by Algorithm~\ref{alg:ap}, we can construct the so-called Weyl function 
$$
M(\la) := \frac{\Delta_{unknown}^K(\la)}{\Delta_{unknown}^{\Pi}(\la)},
$$
for the graph $G_{unknown}$, associated with the vertex $w$. Solving the local inverse problem (see \cite[Section 2]{Yur05}, \cite[Section 4]{FIY08}),
we find the potential on the edge $e_1$, using $M(\la)$. Consequently, we can move the edge $e_1$ from $G_{unknown}$ to $G_{known}$, and reduce
the number of required eigenvalues from the subspectra $\Lambda(L_k)$, $k = \overline{1, b-1}$. If the graph $G_{unknown}$ consists of the only edge $e_1$,
we can recover $\sigma_1$ as a solution of the classical inverse problem on a finite interval by the Weyl function $M(\la)$ (see \cite{FY01}).
\end{remark}

\bigskip

{\large \bf 4. Completeness conditions}

\bigskip

In the previous section, uniqueness Theorems~\ref{thm:uniqap} and \ref{thm:uniq} are formulated for an abstract subset of the spectrum $\Lambda'(L)$ and require the completeness of the system of vector functions $\mathcal S$, defined in $\eqref{defS}$.
In this section, we provide sufficient conditions on the subspectrum $\Lambda'(L)$, that 
make the system $\mathcal S$ complete in $\mathcal H$. Our analysis is based on the asymptotic behavior of the eigenvalues, described by Lemma~\ref{lem:asymptla}.

Denote by $\mathcal K$ the subset of all indices $k = \overline{1, m}$, such that
$\al_k \in \left( 0, \frac{1}{2} \right)$ and $\al_k \ne \al_j$ for $j = \overline{1, m} \backslash \{ k \}$.
Suppose that $\mathcal K$ is nonempty. Then $\Lambda(L)$ includes the subsequences of eigenvalues in the form
\begin{equation} \label{asymptla3}
    \la_{nk} = \rho_{nk}^2, \quad \rho_{nk} = n + \al_k + \varkappa_n, \quad n \in \mathbb Z, \: k \in \mathcal K.
\end{equation}
The numeration here is different from the one used in Lemmas~\ref{lem:asymptla0} and \ref{lem:asymptla}.
In fact, each pair of subsequences from Lemma~\ref{lem:asymptla} with $\al_{k_1}$ and $\al_{k_2} = 1-\al_{k_1}$ is joined together.
Further we suppose that the eigenvalues are numbered according to \eqref{asymptla3}.
Note that this numeration is not unique. In fact, any finite number of the first values in \eqref{asymptla3} can be chosen arbitrarily 
from $\Lambda(L)$.

Recall that $l$ is the number of edges in the tree $G_{unknown}$. Assume that $l \le |\mathcal K|$. 
For definiteness, let the indices $\{ 1, 2, \dots, l \}$ belong to $\mathcal K$. 
Put $\Lambda'(L) := \{ \la_{nk} \}_{n \in \mathbb Z, \, k = \overline{1, l}} \backslash \{ \la_{n1} \}_{n = 1}^{r-1}$, and impose the following assumptions.

\smallskip

($A_1$) The eigenvalues in $\Lambda'(L)$ are distinct and positive.

($A_2$) $\Delta_{known}^{\Pi}(\la) \ne 0$, $\Delta_{unknown}^{\Pi}(\la) \ne 0$ for all $\la \in \Lambda'(L)$.

($A_3$) $\Delta_{unknown}^K(0) \ne 0$, $\Delta_{unknown}^{\Pi}(0) \ne 0$.

($A_4$) The characteristic functions $\Delta_{unknown}^K(\la)$ and $\Delta_{unknown}^{\Pi}(\la)$ do not have common zeros.

\smallskip

Note that it is possible to work with multiple eigenvalues with some technical modifications (see \cite{Bond17-1}). Assumption ($A_3$) can be easily achieved by a shift of the spectrum.
Assumption ($A_4$) holds automatically, if the graph $G_{unknown}$ has an only edge. Of course, one can also include the subsequences of eigenvalues with asymptotics $(n + \varkappa_n)^2$ or $(n + 1/2 + \varkappa_n)^2$ into $\Lambda'(L)$, but these cases require technical 
modifications, so we confine ourselves to using subsequences in the form \eqref{asymptla3}.

Further we need the following auxiliary fact (see \cite[Corollary~3]{Bond17-preprint}).

\begin{lem} \label{lem:prod}
Let $\{ \la_n \}_{n \in \mathbb Z}$ be an arbitrary sequence of nonzero complex numbers in the form
$$
    \la_n = (n + a + \varkappa_n)^2, \quad n \in \mathbb Z, \quad a \in \mathbb C.
$$
Then the function 
$$
    P(\la) := \prod_{n = -\iy}^{\iy} \left( 1 - \frac{\la}{\la_n} \right)
$$
admits the representation
$$
    P(\la) = C (\cos 2 \rho \pi - \cos 2 a \pi) + \varkappa_{2, even}(\rho),
$$
where $C$ is a nonzero constant.
\end{lem}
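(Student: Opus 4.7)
The plan is to reduce the general product to the explicitly computable unperturbed product plus a Paley--Wiener correction.

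First I would dispose of the \emph{baseline case} $\varkappa_n \equiv 0$, where $\la_n^0 = (n+a)^2$. Using the Euler sine product twice, together with the trigonometric identity
$$\sin\pi(\rho-a)\sin\pi(\rho+a) = \tfrac{1}{2}\bigl(\cos 2\pi a - \cos 2\pi\rho\bigr),$$
one obtains directly
$$P_0(\la) := \prod_{n\in\mathbb{Z}}\left(1 - \frac{\rho^2}{(n+a)^2}\right) = \frac{\cos 2\pi\rho - \cos 2\pi a}{2\sin^2\pi a},$$
which is precisely the asserted form with $C_0 := (2\sin^2\pi a)^{-1}\ne 0$ and zero Paley--Wiener correction. (Checking $P_0(0)=1$ confirms the normalization.)

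Next I would handle the perturbation by writing $P(\la)=P_0(\la)\cdot D\cdot R(\rho)$, where
$$D := \prod_{n\in\mathbb Z}\frac{(n+a)^2}{(n+a+\varkappa_n)^2},\qquad R(\rho) := \prod_{n\in\mathbb Z}\left(1 + \frac{\varkappa_n}{n+a-\rho}\right)\left(1 + \frac{\varkappa_n}{n+a+\rho}\right).$$
Cauchy--Schwarz together with $\{\varkappa_n\}\in\ell_2$ gives $\sum_n |\varkappa_n|/|n+a|<\infty$, so $D$ converges to a nonzero constant; the same inequality shows that for every $\rho$ outside the pole set both factors of $R(\rho)$ converge absolutely, and the apparent poles cancel against zeros of $P_0$ so that $P_0\cdot R$ is entire. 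Setting $C := DC_0$ then gives
$$P(\la) - C(\cos 2\pi\rho - \cos 2\pi a) = C(\cos 2\pi\rho - \cos 2\pi a)\bigl(R(\rho)-1\bigr),$$
and my task reduces to proving that this right--hand side is $\varkappa_{2,\text{even}}(\rho)$, i.e.\ lies in $B_{2,2\pi}$ and is even.

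Evenness is automatic, since both $P(\la)$ (a function of $\rho^2$) and $\cos 2\pi\rho-\cos 2\pi a$ are even in $\rho$. Exponential type $\le 2\pi$ follows because the canonical product $P(\la)$ over the zero set $\{\pm(n+a+\varkappa_n)\}_{n\in\mathbb Z}$ has type $\pi$ in $\rho$ per sign, yielding total type $2\pi$, and $\cos 2\pi\rho-\cos 2\pi a$ has exactly the same type. The decisive step \textemdash{} and the main obstacle \textemdash{} is the $L_2(\mathbb R)$ bound on
$$(\cos 2\pi\rho-\cos 2\pi a)(R(\rho)-1).$$
Here one uses $\log R(\rho)=\sum_n[\log(1+\varkappa_n/(n+a-\rho))+\log(1+\varkappa_n/(n+a+\rho))]$ and Cauchy--Schwarz bounds of the form $\|\varkappa\|_{\ell_2}\cdot\bigl(\sum_n (n+a\mp\rho)^{-2}\bigr)^{1/2}$ to show $R(\rho)-1=O(|\rho|^{-1/2})$ in an averaged sense along the real axis, while the zeros of $\cos 2\pi\rho-\cos 2\pi a$ cancel the simple poles of $R$ at $\rho=\pm(n+a)$; together these deliver the $L_2$ bound. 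Paley--Wiener then identifies the difference as an element of $B_{2,2\pi}$, and the evenness forces the representation $\varkappa_{2,\text{even}}(\rho)=\int_0^{2\pi}\mathscr N(t)\cos\rho t\,dt$, completing the proof.
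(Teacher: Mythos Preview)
The paper does not prove this lemma; it is quoted verbatim as ``Corollary~3'' from the preprint \cite{Bond17-preprint}, so there is no in-paper argument to compare your proposal against.

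Your overall architecture is sound: the baseline identity $P_0(\rho^2)=(\cos 2\pi\rho-\cos 2\pi a)/(2\sin^2\pi a)$ is correct (for $a\notin\mathbb Z$; the integer case needs a separate remark since then $C_0$ blows up and the unperturbed sequence has a zero term), and factoring $P=D\,P_0\,R$ with $D\ne 0$ is the natural move. Evenness and the exponential-type bound are clear.

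The genuine gap is the $L_2(\mathbb R)$ estimate for $(\cos 2\pi\rho-\cos 2\pi a)(R(\rho)-1)$. Your sketch invokes a Cauchy--Schwarz bound giving ``$R(\rho)-1=O(|\rho|^{-1/2})$ in an averaged sense'', but this is not an argument: the individual factors $1+\varkappa_n/(n+a\mp\rho)$ have simple poles on the real line, so $R(\rho)-1$ is not even locally bounded there, and you cannot separate the analysis of $R-1$ from the compensating zeros of $\cos 2\pi\rho-\cos 2\pi a$. What is actually needed is a uniform local estimate of the entire function $F(\rho):=P(\rho^2)-C(\cos 2\pi\rho-\cos 2\pi a)$ itself near each lattice point $n\pm a$, together with a tail estimate away from the zeros; the standard route is to bound $\log|R(\rho)|$ on circles $|\rho-(n+a)|=\tfrac12$ using $\{\varkappa_n\}\in\ell_2$, apply the maximum principle on the resulting contour, and sum. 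As written, the phrase ``in an averaged sense'' hides exactly the work that makes the lemma nontrivial, so this step should be expanded into a real estimate before the proof can be accepted.
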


\begin{thm} \label{thm:complete}
Under assumptions ($A_1$)--($A_4$), the system of vector functions $\mathcal S$, defined in \eqref{defS}, is complete in $\mathcal H$.
\end{thm}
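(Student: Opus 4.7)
The plan is to argue by contradiction: assume $f = [f_1, f_2]^T \in \mathcal{H}$ is orthogonal to every element of $\mathcal{S}$ and show $f = 0$. First, orthogonality to the finite subsystem $\{s_j\}_{j=0}^{r-2}$ is precisely the set of moment conditions \eqref{cond0}. Orthogonality to $s(t, \lambda)$ for $\lambda \in \Lambda'(L)$, by \eqref{defs}, yields
\[
\Delta_{known}^K(\lambda) A(\rho) + \Delta_{known}^\Pi(\lambda) B(\rho) = 0, \quad \lambda \in \Lambda'(L),
\]
where $A(\rho) := \rho^{-1}\int_0^{l\pi} f_1(t)\sin \rho t\,dt$ and $B(\rho) := \int_0^{l\pi} f_2(t)\cos \rho t\,dt$. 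Since $\Lambda'(L) \subseteq \Lambda(L)$, relation \eqref{DeltaPiK} combined with assumption $(A_2)$ equates $-\Delta_{known}^K/\Delta_{known}^\Pi$ with $\Delta_{unknown}^K/\Delta_{unknown}^\Pi$ on $\Lambda'(L)$, allowing me to rewrite the orthogonality identity as
\[
H(\lambda) := A(\rho)\Delta_{unknown}^K(\lambda) - B(\rho)\Delta_{unknown}^\Pi(\lambda) = 0, \quad \lambda \in \Lambda'(L).
\]

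Next I would study $H$ as an entire function. It is even in $\rho$, of exponential type at most $2l\pi$ in $\rho$, and the moment conditions force $A(\rho), B(\rho) = O(\rho^{r-1})$ as $\rho \to 0$; combined with $(A_3)$ this makes $\lambda = 0$ a zero of $H$ of order at least $(r-1)/2$. Applying Lemma~\ref{lem:prod} to each sequence $\{\lambda_{nk}\}_{n \in \mathbb{Z}}$ for $k = 1, \ldots, l$, I would form the entire product $\Pi_k(\lambda) = C_k(\cos 2\rho\pi - \cos 2\alpha_k\pi) + \varkappa_{2,even}(\rho)$ with simple zeros exactly at $\lambda_{nk}$ and type $2\pi$ in $\rho$. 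Setting $\Pi := \prod_{k=1}^l \Pi_k$ (type $2l\pi$ in $\rho$) and $Q(\lambda) := \prod_{n=1}^{r-1}(1 - \lambda/\lambda_{n1})$ to absorb the excluded eigenvalues, the function $G := HQ/\Pi$ is entire and of exponential type $0$ in $\rho$. Sharp estimates on real and imaginary axes — from \eqref{DK}, \eqref{DPi}, the sine-type structure of the $\Pi_k$ from Lemma~\ref{lem:prod}, and Riemann-Lebesgue bounds for $A$ and $B$ — together with Phragm\'en-Lindel\"of and Hadamard's theorem identify $G$ with a polynomial in $\lambda$ of degree at most $(r-3)/2$. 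Since it vanishes to order at least $(r-1)/2 > (r-3)/2$ at $\lambda = 0$, this polynomial must be identically zero, hence $H \equiv 0$.

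With $H \equiv 0$ on $\mathbb{C}$, the identity $A\Delta_{unknown}^K \equiv B\Delta_{unknown}^\Pi$ holds on all of $\mathbb{C}$. Assumption $(A_4)$ guarantees that $\psi := A/\Delta_{unknown}^\Pi = B/\Delta_{unknown}^K$ extends to an entire function of type $0$ in $\rho$. A similar Hadamard/growth-versus-vanishing analysis — using $(A_3)$ together with the fact that a nontrivial polynomial in $\sin\rho\pi, \cos\rho\pi$ cannot belong to $L_2(\mathbb{R})$, whereas Paley-Wiener forces the sine and cosine transforms of $f_1, f_2$ to lie there — then forces $\psi \equiv 0$, so $A \equiv 0 \equiv B$. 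Injectivity of the sine and cosine Fourier integrals on $L_2(0, l\pi)$ finally yields $f_1 = f_2 = 0$, proving completeness.

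The main obstacle will be the growth step that delivers $H \equiv 0$: the exponential types of $H$ and $\Pi$ match exactly, so the argument operates at the critical Paley-Wiener threshold and must delicately balance the polynomial factor $Q$ (inserted to cancel the excluded eigenvalues) against the $(r-1)/2$-order vanishing at $\lambda = 0$ supplied by the moment conditions. A secondary subtlety is that along the real axis the factors $\Delta_{unknown}^K$, $\Delta_{unknown}^\Pi$ exhibit the decay $\rho^{1-r}$, $\rho^{-r}$ from \eqref{DK}, \eqref{DPi}, while each $\Pi_k$ is merely $O(1)$ and oscillates, so the polynomial bound on $G$ must be obtained on rays avoiding the dense zero set of $\Pi$.
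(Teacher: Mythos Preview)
Your proposal is correct and follows essentially the same route as the paper: replace $\Delta_{known}^K/\Delta_{known}^{\Pi}$ by $-\Delta_{unknown}^K/\Delta_{unknown}^{\Pi}$ via \eqref{DeltaPiK} and $(A_2)$, form the entire function $H$, divide by the canonical product over $\Lambda'(L)$ (the paper packages your $\Pi/Q$ and the factor $\lambda^{(r-1)/2}$ directly into a single function $D$), and kill the quotient by growth estimates and Phragm\'en--Lindel\"of. In the second stage the paper evaluates $H$ at the zeros $\{\mu_n\}$ of $\Delta_{unknown}^K$ to obtain $H_2(\mu_n)=0$ and then shows $H_2/(\rho^{r-1}\Delta_{unknown}^K)\equiv 0$ by the same growth argument; your ratio $\psi = B/\Delta_{unknown}^K$ is exactly $\rho^{r-1}$ times this quantity, so your extra $L_2$/Paley--Wiener step to exclude the possible monomial $c\lambda^{(r-1)/2}$ is a harmless variant of the paper's direct vanishing argument.
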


\begin{proof}
Let the vector function $h  = \begin{bmatrix} h_1 \\ h_2 \end{bmatrix} \in \mathcal H$ be such that $(h(t), s(t, \la))_{\mathcal H} = 0$
for all $\la \in \Lambda'(L)$ and $(h(t), s_j(t))_{\mathcal H} = 0$ for $j = \overline{0, r-2}$, i.e.
\begin{gather} \label{sm2}
	\int_0^{l\pi} \left( h_1(t) \Delta_{known}^K(\la) \frac{\sin \rho t}{\rho} + h_2(t) \Delta_{known}^{\Pi}(\la) \cos \rho t \right)\, dt = 0,
	\quad \la \in \Lambda'(L), \\ \label{sm4}
\int_0^{l\pi} h_1(t) t^{2 j + 1} \, dt = 0, \quad \int_0^{l\pi} h_2(t) t^{2 j} \, dt = 0, \quad j = \overline{0, (r-3)/2}.
\end{gather}
Taking the relation $\Delta(\la) = 0$ for $\la \in \Lambda'(L)$ and assumption ($A_2$) into account, we derive from \eqref{DeltaPiK}, that
\begin{equation} \label{sm3}
    \Delta_{known}^K(\la) = -\Delta_{known}^{\Pi}(\la) \frac{\Delta_{unknown}^K(\la)}{\Delta_{unknown}^{\Pi}(\la)}, \quad \la \in \Lambda'(L).
\end{equation}
Substituting \eqref{sm3} into \eqref{sm2} and using ($A_2$) again, we obtain
$$
\int_0^{l\pi} \left( h_1(t) \Delta_{unknown}^K(\la) \frac{\sin \rho t}{\rho} - h_2(t) \Delta_{unknown}^{\Pi}(\la) \cos \rho t \right)\, dt = 0, \quad \la \in \Lambda'(L).
$$
Consequently, the entire function
\begin{equation} \label{defH}
   H(\la) := \rho^{r-1} \int_0^{l\pi} \left( h_1(t) \Delta_{unknown}^K(\la) \frac{\sin \rho t}{\rho} - h_2(t) \Delta_{unknown}^{\Pi}(\la) \cos \rho t \right)\, dt
\end{equation}
has zeros at the points $\la \in \Lambda'(L)$. Furthermore, relation \eqref{sm4} implies
\begin{equation} \label{hzero}
\int_0^{l \pi} h_1(t) \frac{d^j}{d \la^j} \left( \frac{\sin \rho t}{\rho} \right)\Big|_{\la = 0} \, dt = 0, \quad
\int_0^{l \pi} h_2(t) \frac{d^j}{d \la^j} \cos \rho t \Big|_{\la = 0} \, dt = 0, \quad j = \overline{0, (r-3)/2}.
\end{equation}
These relations together with \eqref{defH} and ($A_3$) yield that the function $H(\la)$ has the zero $\la = 0$ of multiplicity $r-1$.

Using \eqref{DK}, \eqref{DPi} and \eqref{defH}, we obtain the estimate
\begin{equation} \label{estH}
H(\la) = O(|\rho|^{-1}\exp(2 |\mbox{Im}\,\rho| l \pi)).
\end{equation}

Consider the function
$$
D(\la) := \la^{r-1}  \prod_{n \in \mathbb Z \backslash \overline{1, r-1}} \left( 1 - \frac{\la}{\la_{n1}} \right) \prod_{k = 2}^l \prod_{n = -\iy}^{\iy} \left( 1 - \frac{\la}{\la_{nk}} \right).
$$
In view of ($A_1$), the function $\dfrac{H(\la)}{D(\la)}$ is entire. Using asymptotic relations \eqref{asymptla3} and Lemma~\ref{lem:prod}, we obtain the estimate
$$
|D(\rho^2)| \ge C \exp(2 |\mbox{Im}\,\rho| l \pi), \quad \eps \le \arg \rho \le \pi - \eps, \quad |\rho| \ge \rho^*,
$$
for some positive $\eps$ and $\rho^*$. Applying this estimate together with \eqref{estH}, we get
$$
	\frac{H(\la)}{D(\la)} = O(|\rho|^{-1}), \quad \la = \rho^2, \quad \eps \le \arg \rho \le \pi - \eps, \: |\rho| \to \iy. 
$$
Using Phragmen-Lindel\"of's and Liouville's theorems, we conclude that $H(\la) \equiv 0$.

Denote by $\{ \mu_n \}_{n \in \mathbb N}$ the zeros of the characteristic function $\Delta_{unknown}^K(\la)$.
For simplicity, let all these zeros be simple. The case of multiple zeros requires minor modifications. Assumption ($A_3$) implies $\mu_n \ne 0$.
By virtue of ($A_4$), $\Delta_{unknown}^{\Pi}(\mu_n) \ne 0$. Therefore it follows from \eqref{defH} and the relation $H(\la) \equiv 0$, that the entire function
$$
     H_2(\la) := \int_0^{l\pi} h_2(t) \cos \rho t \, dt
$$
has zeros at $\la = \mu_n$, $n \in \mathbb N$. Moreover, in view of \eqref{hzero}, the function $H_2(\la)$ has the zero $\la = 0$ of multiplicity $\frac{r-1}{2}$. Consequently, the function
$\dfrac{H_2(\la)}{\rho^{r-1} \Delta_{unknown}^K(\la)}$ is entire. Using \eqref{DK}, one can obtain the following estimate
$$
|\rho^{r-1} \Delta_{unknown}^K(\rho^2)| \ge C \exp(|\mbox{Im}\, \rho| l \pi), \quad \eps \le \arg \rho \le \pi - \eps, \quad |\rho| \ge \rho^*,
$$
for some positive $\eps$ and $\rho^*$. On the other hand, $H_2(\la) = \varkappa_{l, even}(\rho)$. Applying Phragmen-Lindel\"of's and Liouville's theorems, we conclude that $H_2(\la) \equiv 0$ and, consequently, $h_2 = 0$ in $L_2(0, l\pi)$. Using \eqref{defH} again, we get that $h_1 = 0$. Thus, $h = 0$ and the system $\mathcal S$ is complete in~$\mathcal H$.
\end{proof}

\begin{remark} \label{rem:add}
If the potentials are unknown on a sufficiently large part of the graph $G$, the spectrum $\Lambda(L)$ can contain insufficient number of sequences in the form \eqref{asymptla3} to construct a complete system $\mathcal S$. Then one can consider boundary value problems of the same form as $L$ with changed boundary conditions in some vertices from $\partial G_{known}$, and use eigenvalues of those problems in the set $\Lambda'(L)$ (see Example~\ref{ex:2}).
\end{remark}

\begin{remark} \label{rem:A3}
For $r = 1$ assumption ($A_3$) is not necessary.
\end{remark}

Note that the proof of Theorem~\ref{thm:complete} essentially uses assumptions ($A_1$)--($A_4$). Although assumptions ($A_1$) and ($A_3$) are just technical, ($A_2$) and ($A_4$) are principal. One can easily provide examples of star-shaped quantum graphs with zero potentials, when these conditions are violated. Therefore the question arises, if there exist quantum graphs, satisfying assumptions ($A_1$)-($A_4$). Further we show, how such examples can be constructed for a tree with an arbitrary geometrical structure and arbitrary boundary conditions.

First, we provide a simple sufficient condition for ($A_2$) and ($A_4$). Denote by $G_j$, $j = \overline{1, n}$, $n := |E_w|$, the subtrees, obtained from $G$ by splitting the vertex $w$. Consider the characteristic functions $\Delta_j^D(\la)$ and $\Delta_j^N(\la)$ corresponding to $G_j$ with the Dirichlet and the Neumann boundary condition in $w$, respectively, and the conditions \eqref{MC} and BC in the other vertices. Introduce
$$
\Delta^{\Pi}(\la) := \prod_{j = 1}^n \Delta_j^D(\la).
$$

One can easily show that the following condition implies assumptions ($A_2$) and ($A_4$):

\smallskip

($A_5$) The characteristic functions $\Delta(\la)$ and $\Delta^{\Pi}(\la)$ do not have common zeros.

\smallskip

Condition ($A_5$) holds, if no two functions among $\Delta_j^D(\la)$ have common zeros and $\Delta_j^D(\la)$ does not have common zeros with $\Delta_j^N(\la)$ for $j = \overline{1, n}$. Suppose that, on the contrary, $\Delta_j^D(\la_0) = 0$ and $\Delta_j^N(\la_0) = 0$ for some $\la_0$. For simplicity, denote by $e_j$ the edge, incident to $w$ in the graph $G_j$, and by $G_{j, next}$ the tree, obtained from $G_j$ by removing the vertex $w$ and the edge $e_j$. The notations $\Delta_{j, next}^K(\la)$ and $\Delta_{j, next}^{\Pi}(\la)$ will be used for the characteristic functions for the graph $G_{j, next}$ similar to $\Delta_{unknown}^K(\la)$ and $\Delta_{unknown}^{\Pi}(\la)$ for $G_{unknown}$. The second vertex, incident to $e_j$, plays the role of $w$. Then the following relations hold
\begin{align*}
\Delta_j^D(\la_0) & = S_j(\pi, \la_0) \Delta_{j, next}^K(\la_0) + S_j^{[1]}(\pi, \la_0) \Delta_{j, next}^{\Pi}(\la_0) = 0, \\
\Delta_j^N(\la_0) & = C_j(\pi, \la_0) \Delta_{j, next}^K(\la_0) + C_j^{[1]}(\pi, \la_0) \Delta_{j, next}^{\Pi}(\la_0) = 0.
\end{align*}
Using equation \eqref{eqv} and the initial conditions \eqref{ic}, one can show that the determinant of this system equals
$$
S_j(\pi, \la_0) C_j^{[1]}(\pi, \la_0) - S_j^{[1]}(\pi, \la_0) C_j(\pi, \la_0) = 1.
$$
Consequently, we conclude that $\la_0$ is a common zero of $\Delta_{j, next}^K(\la)$ and $\Delta_{j, next}^{\Pi}(\la)$. 

Thus, we have proved that, if condition ($A_5$) holds for all the subtrees $G_{j, next}$, $j = \overline{1, n}$, and the characteristic functions $\Delta_j^D(\la)$ pairwise do not have common zeros, then ($A_5$) holds for the whole graph $G$. We use this fact to construct a boundary value problem on the tree $G$, satisfying assumption ($A_5$), recursively.

Consider an arbitrary boundary value problem $L$ on a tree $G$. Suppose that assumption ($A_5$) does not hold for $G$. If $G$ is a star-shaped graph, this means that some pairs of the functions $\Delta_j^D(\la)$ have common zeros. For each fixed $j = \overline{1, n}$, the zeros of $\Delta_j^D(\la)$ are the eigenvalues of the corresponding Sturm-Liouville boundary value problem on the finite interval. By the transfrom $\sigma_j \to \sigma_j + c_j x_j$, $\ga_j \to \ga_j - c_j \pi$, we shift the corresponding spectrum by a constant $c_j$. 
One can always choose constants $c_j$, $j = \overline{1, n}$ to make the eigenvalues for all the edges distinct.

If the tree $G$ has a more complicated structure, we suppose that we already have applied our algorithm recursively and achieved condition ($A_5$) for the subtrees $G_{j, next}$, $j = \overline{1, n}$. Consequently, the characteristic functions $\Delta_j^D(\la)$ and $\Delta_j^N(\la)$ can not have common zeros for every fixed $j = \overline{1, n}$. Then for each $j = \overline{1, n}$, we can add a constant $c_j$ to all the potentials on the tree $G_j$ and make the zeros of the different functions $\Delta_j^D(\la)$ distinct. Note that the shift does not affect condition ($A_5$) for subtrees. After such shifts, we can add a large positive constant to all the potentials to make all the eigenvalues of $L$ positive and to achieve condition ($A_3$). Thus, we can change potentials for any initial graph to satisfy assumptions ($A_1$)--($A_4$), probably, except for the requirement of distinct eigenvalues in ($A_1$), which is not so important.

\bigskip

{\large \bf 5. Examples}

\bigskip

In this section, we consider two examples, illustrating our method for solution of the partial inverse problems on trees. The results of works \cite{Bond17-1, Bond17-2, Bond17-preprint} for the star-shaped graphs with edges of equal length can also be considered as examples of the technique developed in this paper. 

\begin{example} \label{ex:1}
Consider the graph in Figure~\ref{img:2}. It has five edges $\{ e_j \}_{j = 1}^5$. The length of the edge $e_j$ equals $T_j$, where $T_j = \pi$ for $j = 1, 2, 3$ and $T_j = 2 \pi$ for $j = 4, 5$.
For brevity, we shall not divide the edges $e_4$ and $e_5$ into smaller ones. Consider the system of the Sturm-Liouville equations with singular potentials:
$$
-(y_j^{[1]})' - \sigma_j(x_j) y_j^{[1]} - \sigma_j^2(x_j) y_j = \la y_j, \quad x_j \in (0, T_j), \quad j = \overline{1, 5}.
$$

\begin{figure}[h!]
\centering
\begin{tikzpicture}
\filldraw (0, 0) circle (1pt) node[anchor=south]{$w$};
\filldraw (2, 0) circle (1pt) node[anchor=north]{$\pi$};
\draw (0, 0) edge node[auto]{$e_1$} (2, 0);
\draw (0.2, -0.2) node{$0$};
\filldraw (3.5, 0.7) circle (1pt) node[anchor=north]{$0$};
\draw (2, 0) edge node[auto]{$e_2$} (3.5, 0.7);
\filldraw (3.5, -0.7) circle (1pt) node[anchor=south]{$0$};
\draw (2, 0) edge node[anchor=north]{$e_3$} (3.5, -0.7);  
\draw[dashed] (0, 0) edge [bend left] (3.7, 1);
\draw[dashed] (0, 0) edge [bend right] (3.7, -1);
\draw[dashed] (3.7, -1) arc(-80:80:1);
\draw (2.5, -2) node{$G_{unknown}$};
\draw (-0.6, 0) node{$2 \pi$};
\filldraw (-3, 1.2) circle (1pt) node[anchor=north]{$0$};
\draw (0, 0) edge node[anchor=south]{$e_4$} (-3, 1.2);
\filldraw (-3, -1.2) circle (1pt) node[anchor=south]{$0$};
\draw (0, 0) edge node[auto]{$e_5$} (-3, -1.2);
\draw[dashed] (0, 0) edge [bend right] (-2.7, 1.5);
\draw[dashed] (0, 0) edge [bend left] (-2.7, -1.5);
\draw[dashed] (-2.7, 1.5) arc(90:270:1.5);
\draw (-2, -2) node{$G_{known}$};

\end{tikzpicture}
\caption{Example~\ref{ex:1}}
\label{img:2}
\end{figure}

The standard matching conditions \eqref{MC} take the form
\begin{gather*}
y_1(\pi) = y_2(\pi) = y_3(\pi), \quad y_1^{[1]}(\pi) + y_2^{[1]}(\pi) + y_3^{[1]}(\pi) = 0, \\
y_1(0) = y_4(2 \pi) = y_5(2 \pi), \quad -y_1^{[1]}(0) + y_4^{[1]}(2 \pi) + y_5^{[1]}(2 \pi) = 0.
\end{gather*}
Impose the following boundary conditions BC:
$$
y_2^{[1]}(0) = 0, \quad y_3(0) = 0, \quad y_4^{[1]}(0) = 0, \quad y_5(0) = 0.
$$

The edges $e_1$, $e_2$, $e_3$ belong to $G_{unknown}$, and the edges $e_4$, $e_5$ belong to $G_{known}$. Using \eqref{Dfact}, we obtain the following representations for the characteristic functions:
\begin{equation*} 
\arraycolsep=1.4pt\def\arraystretch{1.5}
\begin{array}{l}    
\Delta_{known}^K(\la) = C_4^{[1]}(2\pi, \la) S_5(2\pi, \la) + C_4(2\pi, \la) S_5^{[1]}(2\pi, \la), \\
\Delta_{known}^{\Pi}(\la) = C_4(2\pi, \la) S_5(2\pi, \la), \\
\Delta_{unknown}^K(\la) = C_1^{[1]}(\pi, \la) C_2(\pi, \la) S_3(\pi, \la) + C_1(\pi, \la) C_2^{[1]}(\pi, \la) S_3(\pi, \la) + C_1(\pi, \la) C_2(\pi, \la) S_3^{[1]}(\pi, \la), \\ 
\Delta_{unknown}^{\Pi}(\la) = S_1^{[1]}(\pi, \la) C_2(\pi, \la) S_3(\pi, \la) + S_1(\pi, \la) C_2^{[1]}(\pi, \la) S_3(\pi, \la) + S_1(\pi, \la) C_2(\pi, \la) S_3^{[1]}(\pi, \la).
\end{array}
\end{equation*}
Substituting \eqref{CS} into these relations, we calculate
\begin{equation} \label{Dex1}
\arraycolsep=1.4pt\def\arraystretch{1.5}
\left.
\begin{array}{l}    
\Delta_{known}^K(\la) = \cos 4 \rho \pi + \varkappa_{4, even}(\rho), \\
\Delta_{known}^{\Pi}(\la) = \dfrac{\sin 4 \rho \pi}{2 \rho} + \dfrac{\varkappa_{4, odd}(\rho)}{\rho}, \\
\Delta_{unknown}^K(\la) = 3 \cos^3 \rho \pi - 2 \cos \rho \pi  + \varkappa_{3, even}(\rho), \\ 
\Delta_{unknown}^{\Pi}(\la) = \dfrac{3 \sin \rho \pi \cos^2 \rho \pi}{\rho} - \dfrac{\sin \rho \pi}{\rho} + \dfrac{\varkappa_{3, odd}(\rho)}{\rho}.
\end{array}\right\}
\end{equation}
Substituting \eqref{Dex1} into \eqref{DeltaPiK}, we get
$$
\Delta(\la) = \frac{\sin \rho \pi}{\rho} \left( 36 \cos^6 \rho \pi - 46 \cos^4 \rho \pi + 15 \cos^2 \rho \pi - 1\right) + \frac{\varkappa_{7, odd}(\rho)}{\rho}.
$$

Consequently, the spectrum $\Lambda(L)$ contains three subsequences in the form \eqref{asymptla3} with
$$
\al_1 \approx 0.20433, \quad \al_2 \approx 0.37334, \quad \al_3 \approx 0.47147.
$$
Since $r = 1$, we put $\Lambda'(L) := \{ \la_{nk} \}_{n \in \mathbb Z, \, k = 1, 2, 3}$. In this case,
$\mathcal S = \{ s(t, \la_{nk}) \}_{n \in \mathbb Z, \, k = 1, 2, 3}$. If assumptions ($A_1$)-($A_4$) hold, by virtue of Theorem~\ref{thm:complete}, the system $\mathcal S$ is complete in $\mathcal H$.
In view of Theorem~\ref{thm:uniqap}, the characteristic functions $\Delta_{unknown}^K(\la)$ and $\Delta_{unknown}^{\Pi}(\la)$ are uniquely specified by $\Lambda'(L)$, given $\sigma_4$ and $\sigma_5$.

According to Remark~\ref{rem:1edge}, we recover $\sigma_1$ on the only edge $e_1$ incident to $w$ in the subtree $G_{unknown}$. Let now the graph $G_{known}$ contain the edges $e_1$, $e_4$, $e_5$, and the graph $G_{unknown}$ contain the edges $e_2$ and $e_3$. Then the characteristic functions have the following form
\begin{equation*} 
\arraycolsep=1.4pt\def\arraystretch{1.5}
\begin{array}{l}    
\Delta_{known}^K(\la) = 12 \cos^5 \rho \pi - 14 \cos^3 \rho \pi + 3 \cos \rho \pi + \varkappa_{5, even}(\rho), \\
\Delta_{known}^{\Pi}(\la) = \dfrac{\sin \rho \pi}{\rho} \left(12 \cos^4 \rho \pi - 10\cos^2 \rho \pi + 1 \right) + \dfrac{\varkappa_{5, odd}(\rho)}{\rho}, \\
\Delta_{unknown}^K(\la) = \cos 2 \rho \pi  + \varkappa_{2, even}(\rho), \\ 
\Delta_{unknown}^{\Pi}(\la) = \dfrac{\sin 2 \rho \pi}{2 \rho} + \dfrac{\varkappa_{2, odd}(\rho)}{\rho}.
\end{array}
\end{equation*}

Suppose that the new $\Delta_{unknown}^K(\la)$ and $\Delta_{unknown}^{\Pi}(\la)$ can be constructed by some subspectrum of the problem $L$. Consider the boundary value problem $L_1$ of the same form as $L$ with the Dirichlet boundary condition $y_2(0) = 0$ instead of the Neumann one. We have
\begin{equation*} 
\arraycolsep=1.4pt\def\arraystretch{2}
\begin{array}{l}    
\Delta_{unknown, 1}^K(\la) = \dfrac{\sin 2 \rho \pi}{\rho} + \dfrac{\varkappa_{2, odd}(\rho)}{\rho}, \\
\Delta_{unknown, 1}^{\Pi}(\la) = \dfrac{\sin^2 \rho \pi}{\rho^2} + \dfrac{\varkappa_{2, even}(\rho)}{\rho^2}, \\
\Delta_1(\la) = \dfrac{\sin^2 \rho \pi \cos \rho \pi}{\rho^2} \left( 36 \cos^4 \rho \pi - 34 \cos^2 \rho \pi + 5\right) + \dfrac{\varkappa_{7, even}(\rho)}{\rho^2}.
\end{array}
\end{equation*}
The functions $\Delta_{known}^K(\la)$ and $\Delta_{known}^{\Pi}(\la)$ remain the same as for $L$.

One can easily show that $\Delta_1(\la)$ has two sequences of zeros in the form \eqref{asymptla3} with
$$
\al_1 \approx 0.22410, \quad \al_2 \approx 0.44167.
$$
Using these subsequences, under some additional conditions, one can construct a complete system $\mathcal S_1$ and find the functions $\Delta_{unknown, 1}^K(\la)$ and $\Delta_{unknown, 1}^{\Pi}(\la)$. Note that $\Delta_{unknown, 1}^K(\la)$ and $\Delta_{unknown}^K(\la)$ are the characteristic functions for the Sturm-Liouville boundary value problems on an interval of length $2 \pi$, consisting of the edges $e_2$ and $e_3$, with different boundary conditions. One can recover $\sigma_2$ and $\sigma_3$, solving the inverse problem by two spectra on the interval \cite{HM04-2spectra}.

Reconstruction of $\sigma_2$ and $\sigma_3$ can be simplified in the way described in \cite{Bond17-preprint}. Indeed, note that
$$
\frac{\Delta_{unknown}^K(\la)}{\Delta_{unknown}^{\Pi}(\la)} = \frac{C_2^{[1]}(\pi, \la)}{C_2(\pi, \la)} + \frac{S_3^{[1]}(\pi, \la)}{S_3(\pi, \la)}, \qquad
\frac{\Delta_{unknown, 1}^K(\la)}{\Delta_{unknown, 1}^{\Pi}(\la)} = \frac{S_2^{[1]}(\pi, \la)}{S_2(\pi, \la)} + \frac{S_3^{[1]}(\pi, \la)}{S_3(\pi, \la)}.
$$
The fraction $\dfrac{\Delta_{unknown}^K(\la)}{\Delta_{unknown}^{\Pi}(\la)}$ is already known. For the values $\la \in \Lambda(L_1)$, such that $\Delta_{known}^{\Pi}(\la) \ne 0$ and
$\Delta_{unknown, 1}^{\Pi}(\la) \ne 0$, one can find the fraction $\dfrac{\Delta_{unknown, 1}^K(\la)}{\Delta_{unknown, 1}^{\Pi}(\la)}$ from the relation similar to \eqref{sm3}. Then for such values of $\la$, we know the expression
$$
\frac{S_2^{[1]}(\pi, \la)}{S_2(\pi, \la)} - \frac{C_2^{[1]}(\pi, \la)}{C_2(\pi, \la)} = \frac{1}{S_2(\pi, \la) C_2(\pi, \la)}.
$$

One can interpolate the function $S_2(\pi, \la) C_2(\pi, \la)$ by one subsequence in the form \eqref{asymptla3}. The zeros of the functions $S_2(\pi, \la)$ and $C_2(\pi, \la)$ interlace (see \cite{HM04-2spectra}), so we can find them separately and recover the potential on $e_2$ from two spectra. Subsequently, it is easy to find $\sigma_3$. 

This example involves complicated calculations, so we demonstrated only the general scheme and did not investigate the questions of the Riesz-basicity. However, we shall study these questions for the next example.
\end{example}

\begin{example} \label{ex:2}
Consider the graph in Figure~\ref{img:3}. It has three edges: $e_1$ of length $2 \pi$ and $e_2$, $e_3$ of length $\pi$. Consider the boundary
value problem $L$ for the Sturm-Liouville equations on this graph with the standard matching conditions
\begin{equation} \label{MCex2}
    y_1(2 \pi) = y_2(\pi) = y_3(\pi), \quad y_1^{[1]}(2\pi) + y_2^{[1]}(\pi) + y_3^{[1]}(\pi) = 0
\end{equation}
and the boundary conditions
$$
  	y_1(0) = 0, \quad y_2^{[1]}(0) = 0, \quad y_3^{[1]}(0) = 0.
$$

\begin{figure}[h!]
\centering
\begin{tikzpicture}
\filldraw (0, 0) circle (1pt) node[anchor=south]{$w$};
\filldraw (4, 0) circle (1pt) node[anchor=north]{$0$};
\draw (0.3, -0.2) node{$2 \pi$};
\draw (0, 0) edge node[auto]{$e_1$} (4, 0);
\draw[dashed] (0, 0) edge [bend left] (4, 0.5);
\draw[dashed] (0, 0) edge [bend right] (4, -0.5);
\draw[dashed] (4, -0.5) arc(-80:80:0.5);
\draw (2.5, -1.3) node{$G_{unknown}$};
\draw (-0.5, 0) node{$\pi$};
\filldraw (-1.8, 0.8) circle (1pt) node[anchor=north]{$0$};
\draw (0, 0) edge node[anchor=south]{$e_2$} (-1.8, 0.8);
\filldraw (-1.8, -0.8) circle (1pt) node[anchor=south]{$0$};
\draw (0, 0) edge node[anchor=north]{$e_3$} (-1.8, -0.8);
\draw[dashed] (0, 0) edge [bend right] (-1.5, 1);
\draw[dashed] (0, 0) edge [bend left] (-1.5, -1);
\draw[dashed] (-1.5, 1) arc(90:270:1);
\draw (-1.2, -1.3) node{$G_{known}$};

\end{tikzpicture}
\caption{Example~\ref{ex:2}}
\label{img:3}
\end{figure}

Let $G_{unknown}$ consist of the only edge $e_1$, and let $G_{known}$ consist of the edges $e_2$, $e_3$. The characteristic function
\begin{multline*}
  \Delta(\la) = S_1^{[1]}(2\pi, \la) C_2(\pi, \la) C_3(\pi, \la) + S_1(2\pi, \la) C_2^{[1]}(\pi, \la) C_3(\pi, \la) \\ + 
  S_1(2\pi, \la) C_2(\pi, \la) C_3^{[1]}(\pi, \la)  = \cos^2 \rho \pi (6 \cos^2 \rho \pi - 5) + \varkappa_{4, even}(\rho)
\end{multline*}
has the only subsequence of zeros in the form \eqref{asymptla3} with $\al_1 = \frac{1}{\pi} \arccos \sqrt{\frac{5}{6}}$. In order
to recover the potential $\sigma_1$ on the edge of length $2\pi$, we need two such subsequences. Following Remark~\ref{rem:add}, consider 
another boundary value problem $L_1$ with the matching conditions \eqref{MCex2} and the boundary conditions
$$
   y_1(0) = 0, \quad y_2(0) = 0, \quad y_3^{[1]}(0) = 0.
$$
Its characteristic function admits the representation
$$
 \Delta_1(\la) = \frac{3 \sin \rho \pi \cos \rho \pi}{\rho} (2 \cos^2 \rho \pi - 1) + \frac{\varkappa_{4, odd}(\rho)}{\rho},
$$
and also has the only subsequence of zeros in the form \eqref{asymptla3} with $\al_2 = \frac{1}{4}$. 

Define $\Lambda' := \{ \la_{nk} \}_{n \in \mathbb Z, k = 1, 2}$, where $\{ \la_{n1} \}_{n \in \mathbb Z}$ 
and $\{ \la_{n2} \}_{n \in \mathbb Z}$ are the mentioned subsequences for the problems $L$ and $L_1$, respectively.
Introduce the vector functions
\begin{equation} \label{defsex2}
  	s(t, \la_{n1}) = \begin{bmatrix}
  	                     \Delta_{known}^K(\la_{n1}) \rho_{n1}^{-1} \sin \rho_{n1} t \\
  	                     \Delta_{known}^{\Pi}(\la_{n1}) \cos \rho_{n1} t 
  					 \end{bmatrix},
  	\quad s(t, \la_{n2}) = \begin{bmatrix}
  							\Delta_{known, 1}^K(\la_{n2}) \rho_{n2}^{-1}\sin \rho_{n2} t \\
  	                        \Delta_{known, 1}^{\Pi}(\la_{n2}) \cos \rho_{n2} t 
  						   \end{bmatrix}	 
  	\quad n \in \mathbb Z,
\end{equation}
where the characteristic functions $\Delta_{known, 1}^K(\la)$ and $\Delta_{known, 1}^{\Pi}(\la)$ correspond to the boundary value problem $L_1$.

Suppose that assumption ($A_1$) holds for the set $\Lambda'$, and assumption ($A_2$) holds in the following modified form: $\Delta_{known}^{\Pi}(\la_{n1}) \ne 0$, $\Delta_{known, 1}^{\Pi}(\la_{n2}) \ne 0$, $\Delta_{unknown}^{\Pi}(\la_{nk}) \ne 0$ for all $n \in \mathbb Z$, $k = 1, 2$. Assumption ($A_3$) is not required in view of Remark~\ref{rem:A3}.
Assumption ($A_4$) holds automatically, since
the functions $\Delta_{unknown}^K(\la) = S_1^{[1]}(2\pi, \la)$ and $\Delta_{unknown}^{\Pi}(\la) = S_1(2\pi, \la)$ do not have common zeros
as characteristic functions of Sturm-Liouville problems on an interval with boundary conditions, different at one of the end points.

Modifying the proof of Theorem~\ref{thm:complete}, one can show that the system \eqref{defsex2} is complete in $\mathcal H$. 
Consequently, by Theorem~\ref{thm:uniqap} functions $\Delta_{unknown}^K(\la)$ and $\Delta_{unknown}^{\Pi}(\la)$
are uniquely specified by the set $\Lambda'$, if $\sigma_2$ and $\sigma_3$ are known a priori.  
Thus, we arrive at the inverse problem on the interval, which consists in recovering $\sigma_1$
from the two characteristic functions $S_1(\pi, \la)$ and $S_1^{[1]}(\pi, \la)$ (see Remark~\ref{rem:1edge}). 
This problem is uniquely solvable (see \cite{HM04-2spectra}).

Let us prove that the system \eqref{defsex2} is an unconditional basis. In view of relation \eqref{sm3} and assumptions~($A_1$)--($A_2$), 
we can study the system of the vector functions
$$
    s_{nk}(t) = \begin{bmatrix}
                     \Delta_{unknown}^K(\la_{nk}) \sin \rho_{nk} t \\
                     -\rho_{nk} \Delta_{unknown}^{\Pi}(\la_{nk}) \cos \rho_{nk} t 
   				  \end{bmatrix}
   \quad n \in \mathbb Z, \: k = 1, 2, 
$$
instead of \eqref{defsex2}. The system $\{ s_{nk} \}_{n \in \mathbb Z, \, k = 1, 2}$ is also complete. Using asymptotic formulas \eqref{asymptla3}, 
one can easily show that it is $l_2$-close to the system of the vector functions
$$
    s_{nk}^0(t) = \begin{bmatrix}
    				\cos 2 \al_k \pi \sin (n + \al_k) t \\
    				-\sin 2 \al_k \pi \cos (n + \al_k) t
    		     \end{bmatrix},
   \quad n \in \mathbb Z, \: k = 1, 2, 		      	 
$$
i.e. $\{ \| s_{nk} - s_{nk}^0 \|_{\mathcal H} \} \in l_2$. Note that $\cos 2 \al_2 \pi = 0$ and $\cos 2 \al_1 \pi \ne 0$.
Since the systems $\{ \sin (n + \al_1) t \}_{n \in \mathbb Z}$ and $\{ \cos (n + \al_2) t \}_{n \in \mathbb Z}$ are Riesz bases
in $L_2(0, 2\pi)$ (see \cite{Bond17-preprint}), one can easily show that $\{ s_{nk}^0 \}_{n \in \mathbb Z, \, k = 1, 2}$ is a Riesz
basis in $\mathcal H$. Thus, the system $\{ s_{nk} \}_{n \in \mathbb N, \, k = 1, 2}$ is complete and $l_2$-close to the Riesz basis, 
so it is also a Riesz basis. Hence the normalized system \eqref{defsex2} is a Riesz basis in $\mathcal H$, and one can apply 
Algorithm~\ref{alg:ap}, and subsequently solve the partial inverse problem by the subspectrum $\Lambda'$.
\end{example} 

{\bf Acknowledgment.} 
This work was supported by the Russian Federation
President Grant MK-686.2017.1, by Grant 1.1660.2017/4.6 of the Russian
Ministry of Education and Science, and by Grants 15-01-04864,
16-01-00015, 17-51-53180 of the Russian Foundation for Basic Research.

\medskip

\noindent Natalia Pavlovna Bondarenko \\
1. Department of Applied Mathematics, Samara National Research University, \\
Moskovskoye Shosse 34, Samara 443086, Russia, \\
2. Department of Mechanics and Mathematics, Saratov State University, \\
Astrakhanskaya 83, Saratov 410012, Russia, \\
e-mail: {\it BondarenkoNP@info.sgu.ru}

\end{document}